\newcommand{\href}[1]{#1} %
\newtheorem{defi}{Definition}
\newtheorem{definition}{Definition}
\newtheorem{exer}{Exercise}
\newtheorem{lemma}{Lemma}
\newtheorem{theorem}{Theorem}
\newtheorem{note}{Note}
\newtheorem{prop}{Proposition}
\let\origdoublepage\cleardoublepage
\newcommand{\clearemptydoublepage}{%
  \clearpage{\pagestyle{empty}\origdoublepage}}
\let\cleardoublepage\clearemptydoublepage
\newcommand{\mO}{{\mathcal O}}
 \DeclareMathOperator{\im}{Im}
\DeclareMathOperator{\GL}{GL}
\DeclareMathOperator{\sep}{sep}
\DeclareMathOperator{\aut}{Gal(K^{\sep}/K)}
\newcommand{\p}{{\mathbb{P}^1(\mathbb{C})}}
\newcommand{\bZ}{{\mathbb Z}}
\newcommand{\bC}{{\mathbb C}}
\newcommand{\bR}{{\mathbb R}}
\begin{document}

\pagestyle{empty}

\pagenumbering{roman}

\begin{titlepage}
        \begin{center}
        \vspace*{1.0cm}

        \Huge
        {\bf  Grothendieck's Classification of Holomorphic Bundles over the Riemann Sphere}

        \vspace*{1.0cm}

        \Large
        Andean Medjedovic \\

        \normalsize

        \vspace*{1.0cm}

\begin{center}\textbf{Abstract}\end{center}

In this paper we look at Grothendieck's work on classifying holomorphic bundles over $\p$.
The paper is divided into $4$ parts. The first and second part we build up the necessary background to talk about vector bundles, sheaves, cohomology, etc. The main result of the $3^{rd}$ chapter is the classification of holomorphic vector bundles over $\p$. In the $4^{th}$ chpater we introduce principal $G$-bundles and some of the theory behind them and finish off by proving Grothendieck's theorem in full generality. The goal is a (mostly) self-contained proof of Grothendieck's result accessible to someone who has taken differential geometry.

       \end{center}
\end{titlepage}

\pagestyle{plain}
\setcounter{page}{2}

\cleardoublepage

\renewcommand\contentsname{Table of Contents}
\tableofcontents
\cleardoublepage
\phantomsection    %

\pagenumbering{arabic}

\chapter{Complex Manifolds and Vector Bundles}
\section{Complex Manifolds}
\begin{definition}[Complex Manifold]
We say a manifold $M$ is a complex manifold if each of the charts, $\phi_\alpha$, map from an open subset $U_\alpha$ to an open subset of $V_\alpha \subset \bC^n$ and the transition maps
$\phi_{\alpha \beta} = \phi_{\beta}\circ\phi^{-1}_{\alpha}$ are biholomorphisms (bijective holomorphisms with a holomorphic inverse) as maps from $\phi_{\alpha}(U_{\alpha} \cap U_\beta)$ to
$\phi_{\beta}(U_{\alpha} \cap U_\beta)$.

\end{definition}

We say that the (complex) dimension of the manifold over $\bC$ is $n$. A Riemannian surface is a manifold in the special case that the dimension is $1$.

\begin{definition}[Projective Spaces]
We define the $n$ dimensional projective space over $\mathbb{C}$, $\mathbb{P}(\bC)^n$, as the set of equivalence classes of non-zero vectors in $v \in \bC^{n+1}$ under the equivalence $v \sim \lambda v$ for $\lambda \in \bC$.
\end{definition}

\begin{prop}
The $n$ dimensional projective space is indeed an $n$-dimensional complex manifold.
\end{prop}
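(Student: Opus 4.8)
The plan is to produce an explicit holomorphic atlas. For each $i \in \{0,1,\dots,n\}$ set
\[
U_i = \{\, [z_0 : \cdots : z_n] \in \mathbb{P}^n(\bC) : z_i \neq 0 \,\},
\]
which is well-defined because the condition $z_i \neq 0$ is unchanged under $v \mapsto \lambda v$. Since every nonzero vector has at least one nonzero coordinate, the sets $U_0,\dots,U_n$ cover $\mathbb{P}^n(\bC)$. I would give $\mathbb{P}^n(\bC)$ the quotient topology from $\bC^{n+1}\setminus\{0\}$, observe that each $U_i$ is open (its preimage under the quotient map is open), and record that this topology is Hausdorff and second countable — the quotient map is open, so second countability is immediate, and Hausdorffness follows by separating two distinct lines through the origin with saturated open cones.

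Next I would define $\phi_i : U_i \to \bC^n$ by
\[
\phi_i([z_0 : \cdots : z_n]) = \left( \tfrac{z_0}{z_i}, \dots, \tfrac{z_{i-1}}{z_i}, \tfrac{z_{i+1}}{z_i}, \dots, \tfrac{z_n}{z_i} \right),
\]
i.e. normalize so that the $i$-th coordinate is $1$ and then delete it. This is well-defined since the ratios $z_k/z_i$ are scale-invariant, and it is a homeomorphism onto all of $\bC^n$ with continuous inverse $(w_1,\dots,w_n) \mapsto [\, w_1 : \cdots : w_i : 1 : w_{i+1} : \cdots : w_n \,]$, the $1$ being inserted in slot $i$. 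Thus each $(U_i,\phi_i)$ is a chart onto an open subset $V_i = \bC^n$.

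The substance of the proof is the computation of the transition maps. For $i \neq j$, the image $\phi_i(U_i \cap U_j) \subset \bC^n$ is the open set on which a certain fixed coordinate is nonzero, and $\phi_{ij} = \phi_j \circ \phi_i^{-1}$ is obtained by taking a vector, adjoining a $1$ in slot $i$, dividing all entries by the entry occupying slot $j$, and then deleting that entry. Each component of the resulting map is therefore a quotient of a coordinate function (or the constant $1$) by a coordinate function that is nonvanishing on the domain, hence holomorphic; applying the same description with $i$ and $j$ interchanged shows the inverse is holomorphic, so $\phi_{ij}$ is a biholomorphism. This verifies the conditions of Definition 1, so $\mathbb{P}^n(\bC)$ is a complex manifold, and since the charts take values in $\bC^n$ its complex dimension is $n$.

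I expect the only real obstacle to be careful bookkeeping: keeping the index shift straight in the transition formula according to whether the deleted slot $i$ lies to the left or to the right of the deleted slot $j$, and — if one insists on folding the point-set axioms into the definition of manifold — separately checking Hausdorffness and second countability of the quotient topology. Neither step is conceptually difficult.
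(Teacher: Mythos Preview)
Your proof is correct and follows essentially the same approach as the paper: define the affine charts $U_i=\{z_i\neq 0\}$ with coordinate maps $\phi_i$ given by dividing through by $z_i$, then observe that the transition maps are biholomorphic because their components are ratios of coordinates with nonvanishing denominator. You include more detail than the paper (the Hausdorff and second-countable checks, the explicit inverse, and the index bookkeeping), but the underlying argument is identical up to the cosmetic choice of indexing coordinates from $0$ rather than from $1$.
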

\begin{proof}
Let $[z_1, \cdots, z_{n+1}]$ ($z_i \in \bC$, not all zero) be the equivalence class corresponding to  $(z_1, \cdots, z_{n+1}) \in \bC^{n+1}$. Let $U_i$ be the set of equivalence classes with $z_i$ non-zero. Then $U_i$ cover $\mathbb{P}(\bC)^n$. Let $\phi_i : U_i \to \bC^n$ by $[z_1, \cdots, z_{n+1}] \mapsto (\frac{z_1}{z_i},\cdots,\frac{z_{i-1}}{z_i},\frac{z_{i+1}}{z_i},\cdots, \frac{z_{n+1}}{z_i})$.

One immediately sees that $\phi_i$ is well defined and if $z_i,z_j \ne 0$ then $\phi_j \circ \phi_i^{-1}:\phi_{i}(U_i \cap U_j) \to \phi_{j}(U_{i} \cap U_{j})$ is a biholomorphism.
\end{proof}

\begin{theorem}
A holomorphic function, $f$, on a compact Riemann surface is constant.
\end{theorem}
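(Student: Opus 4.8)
The plan is to deduce this from the one–variable maximum modulus principle, together with the compactness and connectedness of the surface. Write $X$ for the compact Riemann surface (here, as is standard for a Riemann surface, we take $X$ to be connected). First I would observe that $|f| \colon X \to \bR$ is continuous, so by compactness the supremum $M := \sup_{x \in X} |f(x)|$ is finite and is attained at some point $p \in X$.

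Next I would consider the set $S := \{\, x \in X : |f(x)| = M \,\}$. It is nonempty since $p \in S$, and it is closed as the preimage of $\{M\}$ under the continuous map $|f|$. The crucial step is to show $S$ is open. Given $x \in S$, choose a chart $\phi_\alpha \colon U_\alpha \to V_\alpha \subseteq \bC$ with $x \in U_\alpha$, shrinking $U_\alpha$ if necessary so that $V_\alpha$ is an open ball. Then $g := f \circ \phi_\alpha^{-1}$ is holomorphic on $V_\alpha$ and $|g|$ attains the value $M = \sup |g|$ at the interior point $\phi_\alpha(x)$. The maximum modulus principle then forces $g$ to be constant on $V_\alpha$, so $f$ is constant (equal to $f(x)$) on the open set $U_\alpha$; in particular $|f| \equiv M$ on $U_\alpha$, whence $U_\alpha \subseteq S$. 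Thus $S$ is open.

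Since $X$ is connected and $S$ is a nonempty clopen subset, $S = X$. Moreover the openness argument actually showed that $f$ is locally constant at every point of $S = X$, and a locally constant function on a connected space is constant; hence $f$ is globally constant, as required. I expect the only real subtlety to be the bookkeeping of passing between $X$ and its charts — in particular, arranging that the maximum of $|g|$ occurs at an \emph{interior} point of the chart domain so that the maximum modulus principle genuinely applies — rather than anything deep, since that principle (equivalently, the open mapping theorem) from one–variable complex analysis is doing the substantive work.
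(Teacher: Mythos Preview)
Your proof is correct and follows essentially the same approach as the paper: use compactness to locate a point where $|f|$ is maximal, pass to a chart, and invoke the maximum modulus principle. Your version is in fact more careful than the paper's, since you make explicit the clopen/connectedness argument needed to pass from local constancy near the maximum to global constancy, whereas the paper's proof elides this step.
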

\begin{proof}
We have that $|f(p)|$ is maximal for some $p$. Take a chart around $p$ to a neighbourhood of $0$. Then the composition of $f$ with the chart is maximal at $0$, contradicting the maximum modulus principle.
\end{proof}The Riemann sphere is defined to be the Riemann surface $\p$.

\section{Vector Bundles}

Let $M$ be a manifold, we say a (real) vector bundle $V$ over $M$ is pair of a manifold and projection map $(V, \pi)$ with $\pi: V \to M$ so that for every $p \in M$, $\pi^{-1}(p)$ is a $\bR$-vector space we have that there is some open $U$ around $p$, and a homeomorphism $\varphi_U$ with $\varphi_{U_i}: \pi^{-1}(U) \to U \times \bR^k$. We say that the rank of the vector bundle $V$ (over $\bR$) is $k$.

We can extend this definition to holomorphic vector bundles over a complex manifold in the following way:
\begin{definition}
Let $M$ be a complex manifold, we say a pair $(E,\pi)$ over $M$ with rank $k$ is a holomorphic vector bundle if for every $p \in M$, $\pi^{-1}(p)$ is a $\bC$-vector space and there is an open subset $U$ of $M$ with a biholomorphism $\varphi_U: \pi^{-1}(U) \to U \times \bC^k$. Equivalently, we can require the transition maps to $\bC$ be linear isomorphisms:

$$proj(\varphi_U \circ \varphi_V^{-1})|_{(U \cap V)\times \bC^k} : \bC^l \to \bC^l$$

\end{definition}

 Furthermore, we say a vector bundle is a line bundle if it has rank $1$ and we say a (complex) vector bundle $E$ is trivial if it is isomorphic to $\bC^k \times M$. Note that, locally, every bundle is trivial.

 \begin{definition}
 A section of a vector bundle $V$ is a continuous map $\sigma: M \to V$ so that $\pi\circ\sigma = 1_M$. The vector space of all sections on $V$ over $M$ is denoted by $\Gamma(M,V)$. If the vector bundle $E$ is holomorphic and the map $\sigma$ is holomorphic we say it is a holomorphic section and denote the corresponding vector space $H^{0}(M,\mathcal{O}(E))$.
 \end{definition}
  The reason we use this notation will become clear later on.

 \begin{prop}
 Let $E_1$ and $E_2$ be $2$ holomorphic vector bundles over a complex manifold $M$ of rank $k,l$. Then we can define the vector bundles $E_1^*$, $\det(E_1)$, $E_1 \oplus E_2$, and $E_1 \otimes E_2$.
 \end{prop}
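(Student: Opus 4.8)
**Proof proposal for the final Proposition (constructing $E_1^*$, $\det(E_1)$, $E_1 \oplus E_2$, $E_1 \otimes E_2$).**

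The plan is to construct each bundle by specifying its fibers and its transition functions, then invoking the standard gluing principle: given an open cover $\{U_\alpha\}$ of $M$ and holomorphic maps $g_{\alpha\beta}\colon U_\alpha \cap U_\beta \to \GL_m(\bC)$ satisfying the cocycle condition $g_{\alpha\beta} g_{\beta\gamma} = g_{\alpha\gamma}$ on triple overlaps (and $g_{\alpha\alpha} = \mathrm{id}$), there is a holomorphic vector bundle of rank $m$, unique up to isomorphism, with those transition functions. So I would first isolate this gluing lemma (it follows by taking the disjoint union $\bigsqcup_\alpha U_\alpha \times \bC^m$ and quotienting by the equivalence relation induced by the $g_{\alpha\beta}$; the cocycle condition is exactly what makes this an equivalence relation, and the quotient inherits a complex-manifold structure from the charts of $M$ and the identity on $\bC^m$). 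Then each of the four constructions becomes a one-line recipe once I write down the transition cocycle and check the cocycle condition is inherited.

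Concretely, fix trivializing covers so that $E_1$ has transition functions $g_{\alpha\beta}\colon U_\alpha\cap U_\beta \to \GL_k(\bC)$ and $E_2$ has $h_{\alpha\beta}\colon U_\alpha\cap U_\beta \to \GL_l(\bC)$ (passing to a common refinement if necessary). Then: the dual $E_1^*$ has fiber $(\pi^{-1}(p))^*$ and transition functions $(g_{\alpha\beta}^{-1})^{T} = (g_{\alpha\beta}^T)^{-1}$ (taking inverse transpose is what makes the natural pairing $E_1 \otimes E_1^* \to \bC$ well defined); the determinant $\det(E_1) = \Lambda^k E_1$ is a line bundle with fiber $\Lambda^k(\pi^{-1}(p))$ and transition functions $\det(g_{\alpha\beta})\colon U_\alpha\cap U_\beta \to \bC^\times = \GL_1(\bC)$; the direct sum $E_1\oplus E_2$ has fiber $\pi_1^{-1}(p)\oplus\pi_2^{-1}(p)$, rank $k+l$, and block-diagonal transition functions $g_{\alpha\beta}\oplus h_{\alpha\beta} = \bigl(\begin{smallmatrix} g_{\alpha\beta} & 0 \\ 0 & h_{\alpha\beta}\end{smallmatrix}\bigr)$; and the tensor product $E_1\otimes E_2$ has fiber $\pi_1^{-1}(p)\otimes_{\bC}\pi_2^{-1}(p)$, rank $kl$, and transition functions $g_{\alpha\beta}\otimes h_{\alpha\beta}$ (Kronecker product). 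In each case I would verify that the new matrix-valued maps are holomorphic (compositions/determinants/tensor products of holomorphic matrix-valued maps are holomorphic), land in the appropriate general linear group, and satisfy the cocycle condition — this last point is purely functorial: $A \mapsto (A^{-1})^T$, $A\mapsto \det A$, $(A,B)\mapsto A\oplus B$, and $(A,B)\mapsto A\otimes B$ are all group homomorphisms (or bi-homomorphisms) on the relevant general linear groups, so they carry cocycles to cocycles.

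The main obstacle — really the only non-bookkeeping point — is establishing the gluing lemma cleanly, in particular that the quotient space $\bigl(\bigsqcup_\alpha U_\alpha\times\bC^m\bigr)/\!\sim$ is Hausdorff and carries a genuine complex-manifold structure making the projection holomorphic and the local charts biholomorphisms. I would handle Hausdorffness using that $M$ is Hausdorff together with the continuity of the $g_{\alpha\beta}$ on overlaps, and the complex structure by composing the given trivializations with the holomorphic charts of $M$. A secondary subtlety is independence of the construction from the choice of trivializing cover: refining the cover replaces the cocycle by an equivalent one, and the functorial operations above respect this equivalence, so the resulting bundles are well defined up to canonical isomorphism. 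Once the gluing lemma is in hand, the four constructions are immediate; I would present the dual and determinant cases in full and remark that $\oplus$ and $\otimes$ are entirely analogous.
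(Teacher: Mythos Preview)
Your proposal is correct and follows essentially the same approach as the paper: both define the four bundles by specifying their transition functions as $T_{12}^*$ (inverse transpose), $\det(T_{12})$, $T_{12}\oplus \mathcal{T}_{12}$, and $T_{12}\otimes \mathcal{T}_{12}$, and note the resulting ranks $k$, $1$, $k+l$, $kl$. Your version is simply more thorough---you make the gluing lemma explicit, verify the cocycle condition via functoriality, and address Hausdorffness and independence of cover---whereas the paper's proof is a terse four-line sketch that lists the transition maps and asserts the ranks without further justification.
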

 \begin{proof}
 Let $U,V$ be sufficiently small open sets around $p$. Let $\phi_1$, $\phi_2$ be $2$ corresponding charts for $E_1$ and similarly $\varphi_1$, $\varphi_2$ for $E_2$. Let $T_{12}$ and $\mathcal{T}_{12}$ be the linear transition maps for $E_1$ and $E_2$. Then we define the transition charts for $E_1^*$ ,
 $\det(E_1)$, $E_1 \oplus E_2$, and $E_1 \otimes E_2$:

 \begin{itemize}
     \item $T_{12}^*$
     \item $\det(T_{12})$
     \item $T_{12}\oplus \mathcal{T}_{12}$
     \item $T_{12} \otimes \mathcal{T}_{12}$
 \end{itemize}

 These are then invertible and linear and the vector bundles have rank $k, 1, k+l$, and  $kl$ respectively.
 \end{proof}

 \begin{definition}
 Let $E_1$ and $E_2$ be $2$ holomorphic vector bundles over a complex manifold $M$. Suppose we have an invertible map $f$ so that the following diagram commutes and the restriction, $f|_{\pi^{-1}(p)}: \pi^{-1}(p) \to \pi^{-2}(p)$ is linear.

\tikzset{every picture/.style={line width=0.75pt}} %
\begin{center}

\begin{tikzpicture}[x=0.75pt,y=0.75pt,yscale=-1,xscale=1]
\draw    (277.5,124) -- (349.5,123) ;
\draw [shift={(351.5,123)}, rotate = 539.23] [color={rgb, 255:red, 0; green, 0; blue, 0 }  ][line width=0.75]    (10.93,-3.29) .. controls (6.95,-1.4) and (3.31,-0.3) .. (0,0) .. controls (3.31,0.3) and (6.95,1.4) .. (10.93,3.29)   ;

\draw    (273.5,134) -- (303.35,176.37) ;
\draw [shift={(304.5,178)}, rotate = 234.82999999999998] [color={rgb, 255:red, 0; green, 0; blue, 0 }  ][line width=0.75]    (10.93,-3.29) .. controls (6.95,-1.4) and (3.31,-0.3) .. (0,0) .. controls (3.31,0.3) and (6.95,1.4) .. (10.93,3.29)   ;

\draw    (358.5,130) -- (326.65,175.36) ;
\draw [shift={(325.5,177)}, rotate = 305.07] [color={rgb, 255:red, 0; green, 0; blue, 0 }  ][line width=0.75]    (10.93,-3.29) .. controls (6.95,-1.4) and (3.31,-0.3) .. (0,0) .. controls (3.31,0.3) and (6.95,1.4) .. (10.93,3.29)   ;

\draw (314,107) node   {$f$};
\draw (274,159) node   {$\pi _{1}$};
\draw (352,159) node   {$\pi _{2}$};
\draw (266,124) node   {$E_{1}$};
\draw (364,124) node   {$E_{2}$};
\draw (314,187) node   {$M$};

\end{tikzpicture}
\end{center}
 We say that $E_1$ and $E_2$ are isomorphic. Similarly, if there is an injection from $E_1$ to $E_2$ then we say $E_1$ is a subbundle of $E_2$.

 \end{definition}

\begin{theorem}
Let $M$ be a complex manifold and consider a short exact sequence of vector bundles over $M$:
$$0 \to E_1 \xrightarrow{p} E \xrightarrow{q} E_2 \to 0$$
This sequence splits, that is, $E \cong E_1 \oplus E_2$.
\end{theorem}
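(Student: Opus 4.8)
The plan is to split the sequence by equipping $E$ with a Hermitian metric and taking the orthogonal complement of the subbundle $E_1$. First I would cover $M$ by a locally finite family $\{U_\alpha\}$ of open sets over which $E$ is trivial, transport the standard Hermitian form of $\bC^k$ (with $k=\operatorname{rank}E$) to $\pi^{-1}(U_\alpha)$ through the trivialization, and patch these local forms together with a partition of unity subordinate to $\{U_\alpha\}$. Since a positive linear combination of Hermitian inner products is again a Hermitian inner product, this yields a smooth fibrewise Hermitian metric $h$ on $E$. This is the one step that uses paracompactness of $M$, which holds for the Riemann surfaces and projective spaces of interest here.

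Using the hypothesis I identify $E_1$ with the subbundle $p(E_1)\subseteq E$ and set $F:=E_1^{\perp}$, the fibrewise $h$-orthogonal complement, so $F_x=\{v\in E_x : h(v,w)=0 \text{ for all } w\in (E_1)_x\}$. The key verification is that $F$ is a smooth subbundle of rank $k-\operatorname{rank}E_1$: around each point choose a local frame $s_1,\dots,s_k$ of $E$ whose first $r:=\operatorname{rank}E_1$ members frame $E_1$, and run Gram--Schmidt against $h$. Because the partial sums of the $s_i$ stay linearly independent, every denominator that appears is a nonvanishing smooth function, so the resulting orthonormal frame $e_1,\dots,e_k$ is smooth; its last $k-r$ members are orthogonal to $e_1,\dots,e_r$, which span $E_1$, and hence frame $F$. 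Thus $E=E_1\oplus F$ as smooth bundles.

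Next I would check that $q|_F\colon F\to E_2$ is a bundle isomorphism. On each fibre it is linear with kernel $F_x\cap\ker q_x=F_x\cap (E_1)_x=0$, so it is injective; and since $\dim F_x=k-r=\operatorname{rank}E_2=\dim (E_2)_x$ (using $k=\operatorname{rank}E_1+\operatorname{rank}E_2$, immediate from exactness) it is an isomorphism, and a fibrewise-linear bundle map that is a fibrewise isomorphism is a bundle isomorphism. Composing, the bundle map $E_1\oplus E_2\to E$ defined fibrewise by $(v,w)\mapsto p(v)+(q|_F)^{-1}(w)$ is fibrewise linear and a fibrewise isomorphism, hence an isomorphism of bundles, giving $E\cong E_1\oplus E_2$.

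I expect the only real obstacle to be bookkeeping rather than a new idea: verifying the local triviality of $F$, i.e.\ that the orthogonalization can be carried out smoothly and simultaneously over a whole neighbourhood, and confirming that the local metrics glue to a global one. It is worth recording one caveat: the isomorphism produced this way is only $C^{\infty}$ (or continuous) in general, since orthogonal projection is not a holomorphic operation, so the statement is to be read in the smooth/topological category, which is what is used in the sequel.
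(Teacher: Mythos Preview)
Your proof is correct and follows essentially the same route as the paper's: build a fibrewise inner product on $E$ via a partition of unity, take the orthogonal complement of $p(E_1)$, and observe that $q$ restricts to an isomorphism from this complement onto $E_2$. Your write-up is in fact more careful than the paper's---you explicitly verify that the orthogonal complement is a smooth subbundle via Gram--Schmidt and you flag that the resulting splitting is only $C^\infty$, not holomorphic---but the underlying argument is the same.
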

\begin{proof}
We first construct an inner product over $E$.

Let $U_\alpha$ cover $M$ with $E$ trivial over each $U_\alpha$. Let $\rho_\alpha$ be a corresponding partition of unity. We can choose an inner product $\langle, \rangle _\alpha$ on each $E|_{U_\alpha}$. Extend each inner product to be $0$ outside $E|_{U_\alpha}$. Now consider the inner product given by:
$$\langle, \rangle = \sum_{\alpha}\rho_\alpha\langle, \rangle _\alpha$$
This is defined on all of $E$.

Under this inner product we can write $E = (p(E_1))^{\perp} \oplus (p(E_1))$. Note that $p(E_1) \cong E_1$ by injectivity. We also have the restriction of $q|_{(\ker q)^{\perp}}$ from $(\ker q)^{\perp} \to E_2$ is surjective (by exactness) and injective as $q(x) = q(y)$ means $x-y \in \ker(q)$. But $E_2 \cong (\ker q)^{\perp} =  (p(E_1))^{\perp}$ and so the sequence splits.
\end{proof}

\begin{lemma}
Let $L$ be a line bundle on a complex manifold $M$. Then $L$ is trivial if and only if there is some nowhere $0$ section on $L$.
\end{lemma}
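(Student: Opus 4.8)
The plan is to prove both implications directly from the definition of a holomorphic line bundle, the nontrivial direction being essentially the construction of a global frame.

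First I would dispatch the easy direction. If $L$ is trivial, fix an isomorphism of vector bundles $\psi : L \to M \times \bC$. Then the map $\sigma : M \to L$ defined by $\sigma(p) = \psi^{-1}(p,1)$ is a holomorphic section, since $\psi^{-1}$ is holomorphic and the inclusion $p \mapsto (p,1)$ is holomorphic; and it is nowhere zero because $\psi$ restricts to a linear isomorphism on each fibre, so $\psi^{-1}(p,1) \neq 0$ in $\pi^{-1}(p)$ for every $p$.

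For the converse, suppose $\sigma : M \to L$ is a holomorphic section that is nowhere zero. I would define $f : M \times \bC \to L$ by $f(p,\lambda) = \lambda\,\sigma(p)$, using the $\bC$-vector space structure on the fibre $\pi^{-1}(p)$. This is well defined, commutes with the projections to $M$, and on each fibre is the linear map $\bC \to \pi^{-1}(p)$, $\lambda \mapsto \lambda\sigma(p)$, which is an isomorphism precisely because $\pi^{-1}(p)$ is one-dimensional and $\sigma(p) \neq 0$; hence $f$ is a bijection that is linear on fibres. By the isomorphism criterion in the definition of isomorphic bundles, it remains only to check that $f$ is a biholomorphism.

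The main obstacle, and the only place any real content enters, is verifying holomorphicity of $f$ and of $f^{-1}$; I would do this locally. Over a trivializing open set $U$ with biholomorphism $\varphi_U : \pi^{-1}(U) \to U \times \bC$, write $\varphi_U(\sigma(p)) = (p, s_U(p))$, where $s_U : U \to \bC$ is holomorphic (as $\varphi_U \circ \sigma$ is) and nowhere zero (as $\sigma$ is nowhere zero and $\varphi_U$ is a fibrewise isomorphism). Then $\varphi_U \circ f(p,\lambda) = (p, \lambda\, s_U(p))$, which is holomorphic in $(p,\lambda)$, and its inverse $(p,\mu) \mapsto (p, \mu / s_U(p))$ is holomorphic because $1/s_U$ is holomorphic on $U$. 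Since the $U$ cover $M$ and $\varphi_U$ is a biholomorphism, $f$ is a biholomorphism globally. Therefore $f$ is an isomorphism of holomorphic vector bundles and $L \cong M \times \bC$ is trivial, completing the proof.
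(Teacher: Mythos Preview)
Your proof is correct and follows exactly the same route as the paper: the nowhere-zero section $\sigma$ is used to build the fibrewise-linear map $f:M\times\bC\to L$, $(p,\lambda)\mapsto\lambda\,\sigma(p)$, and this is checked to be an isomorphism. You simply supply more detail than the paper, in particular the local verification that $f$ and $f^{-1}$ are holomorphic, which the paper leaves implicit.
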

\begin{proof}
Suppose $L$ is the trivial bundle. Let $x \in M$. Then the section sending $x \mapsto (x,1)$ is nowhere $0$.

Suppose we have a nowhere section $\sigma$, sending $x \to (x,\sigma(x))$. Then consider the isomorphism $f: M \times \bC \to L$ by $(x,c) \mapsto (x,c\sigma(x))$.
\end{proof}
\begin{lemma}
Let $S$ be a Riemann surface and let $E$ be a vector bundle. Then $E \cong E' \oplus I_{{\rm rank} E - 1}$ for some line bundle $E'$.
\end{lemma}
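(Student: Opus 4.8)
I would argue by induction on $n := \operatorname{rank} E$; the base case $n = 1$ is immediate (take $E' = E$, with $I_0$ the rank-zero bundle). For the inductive step the engine is: produce a holomorphic section of $E$ that vanishes nowhere, use it to split off a trivial line subbundle by the splitting theorem for short exact sequences of vector bundles, and apply the inductive hypothesis to the rank-$(n-1)$ quotient.

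So let $n \geq 2$ and assume the statement in rank $n-1$. The first --- and essentially only non-formal --- step is to exhibit a nowhere-vanishing holomorphic section $\sigma$ of $E$. The guiding heuristic is a dimension count: in a trivializing chart a section of $E$ is an $n$-tuple of holomorphic functions, so its zero locus has expected complex codimension $n$; since $\dim_{\mathbb{C}} S = 1 < n$, a section in sufficiently general position relative to the zero section has no zeros. This is precisely where the one-dimensionality of $S$ is used (on higher-dimensional complex manifolds the conclusion fails), and I expect it to be the main obstacle: turning the heuristic into a proof requires knowing that $E$ has enough global holomorphic sections to be perturbed into general position --- an analytic input, and on a compact $S$ one that may need an additional hypothesis, since there a vector bundle can fail to have any nonzero section at all.

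Granting such a $\sigma$, let $L \subseteq E$ be the union of the lines $\mathbb{C}\cdot\sigma(p)$, $p \in S$. On a small enough neighborhood of any point of $S$, since $\sigma$ is nowhere zero, I can complete $\sigma$ to a holomorphic local frame of $E$; reading off the first frame vector exhibits $L$ locally as a direct summand, so $L$ is a holomorphic sub-line-bundle of $E$ and the quotient $Q := E/L$ is a holomorphic vector bundle of rank $n-1$. Because $\sigma$ is a global nowhere-zero section of $L$, the previous lemma gives $L \cong I_1$. Hence there is a short exact sequence of vector bundles over $S$,
$$0 \longrightarrow I_1 \longrightarrow E \longrightarrow Q \longrightarrow 0,$$
which by the splitting theorem splits: $E \cong I_1 \oplus Q$. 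By the inductive hypothesis $Q \cong E' \oplus I_{n-2}$ for some line bundle $E'$, so $E \cong I_1 \oplus E' \oplus I_{n-2} \cong E' \oplus I_{n-1}$, completing the induction.
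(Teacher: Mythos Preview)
Your proposal is correct and takes essentially the same approach as the paper: induct on the rank, produce a nowhere-vanishing section via a dimension-count/perturbation argument, use it to split off a trivial line subbundle $I_1$ by the splitting theorem, and recurse on the quotient. Your caveat about the analytic input needed to guarantee such a section is well placed --- the paper is equally informal at exactly this step, saying only that one perturbs a nonzero section locally around its zeroes.
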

\begin{proof}
We note that if the rank of $E$ is at least $2$ then there is a section $\sigma$ that is non-zero everywhere by perturbing a section (not identically $0$) locally around its zeroes.

Take the line bundle parametrized by $\sigma$, $L \cong I_1$. We can then split $E$ as:
$$0 \to I_1 \to E \to E_1$$
Is short exact for some $E_1$.
We then do the same for $E_1$, inductively, so $E \cong E' \oplus I_{{\rm rank} E - 1}$.
\end{proof}

Once we have the notion of a degree of a line bundle we will be able to show $2$ line bundles are isomorphic if and only if their degree is the same. This, combined with the above, gives us that $E_1 \cong E_2$ if and only if they have the same rank and same degree.

\chapter{Sheaves And Cohomology}
\section{Sheaves}

\begin{definition}
Let $X$ be a topological space. For every open $U \subset X$ we associate an abelian group $\mathcal{F}(U)$ so that:
\begin{itemize}
    \item $F(\emptyset) = 0$
    \item If $V \subseteq U$ there is a group morphism $\rho_{U,V}:\mathcal{F}(U) \to \mathcal{F}(V)$
    \item $\rho_{U,U} = 1$
    \item If $W \subseteq V \subseteq U$ then $\rho_{U,W} = \rho_{V,W} \circ \rho_{U,V}$
\end{itemize}
We say that $\mathcal{F}$ is a presheaf. We can write $\rho_{U,V}(f)$ as $f|_V^{U}$
\end{definition}

\begin{definition}
Let $\mathcal{F}$ be a presheaf on $X$. Let $U$ be open with open cover $\{U_i\}$.  $\mathcal{F}$ is a sheaf if we have:
\begin{itemize}
    \item If $s\in \mathcal{F(U)}$ with $\rho_{U,U_i}(s) = 0$ for all $i$, then $s= 0$
    \item If $s_i \in \mathcal{F}(U_i)$ with (for any $i,j$):
            $$\rho_{U, U_i\cap U_j}(s_i) = \rho_{U, U_i\cap U_j}(s_j)$$
            then there is $s \in \mathcal{F}(U)$ so that $\rho_{U,U_i}(s) = s_i$
\end{itemize}

\end{definition}
\begin{prop}
Let $k \in \mathbb{Z}$. Let $E$ be a vector bundle over a complex manifold $M$. Let $X$ be a Riemann surface with $x \in X$ and $L$ be a line bundle over $X$. Then the following are sheaves:
\begin{itemize}
    \item $\mathcal{O}(E)$ where to each $U \subset M$ we associate the abelian group (under pointwise multiplication)  $H^{0}(E,U)$ with the maps $\rho_{U,V}$ being restrictions.
    \item $\mathcal{O}_M$, where to each $U \subset M$ we associate the abelian group of holomorphic functions $f: U \to \bC$
    \item $\mathcal{O}^*_M$, where to each $U \subset M$ we associate the abelian group of holomorphic functions $f: U \to \bC^*$
    \item $\mathcal{O}_X(-kx)$, where to each $U \subset X$ we associate the abelian group of holomorphic functions $f: U \to \bC$ that vanish at $x$ with multiplicity $k$,
    \item $\mathcal{O}_X(kx)$ where to each $U \subset X$ we associate the abelian group of holomorphic functions $f: U \to \bC$ that that have a pole of order $k$ at $x$.
    \item $L(-x)$, where $U$ is associated to the holomorphic sections of $L|_{U}$, vanishing at $x$.
    \item $\bC_x$, the skyscraper sheave, with $U$ associated to $\bC$ if $x\in U$ and $0$, otherwise.
 \end{itemize}
\end{prop}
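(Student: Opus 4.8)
The plan is to check, item by item, the two sheaf axioms from the preceding definition; the presheaf axioms are automatic throughout, since in every case the maps $\rho_{U,V}$ are honest restrictions of functions or sections --- or, in the skyscraper case, the identity of $\bC$ or the zero map --- and these visibly satisfy $\rho_{U,U} = 1$ and $\rho_{U,W} = \rho_{V,W}\circ\rho_{U,V}$. So all that is at stake is (S1): a section restricting to zero on every member of an open cover of $U$ is zero; and (S2): a compatible family of local sections glues to a section over $U$.

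First I would dispatch $\mathcal{O}_M$ and $\mathcal{O}(E)$, which are the prototypes for everything except the skyscraper. For (S1): if $f$ lies in $\mathcal{O}_M(U)$ or in $H^0(E,U)$ and restricts to $0$ on each $U_i$ of a cover of $U$, then $f$ vanishes at every point of $U = \bigcup_i U_i$, hence $f = 0$. For (S2): given $s_i$ on $U_i$ agreeing on each overlap $U_i \cap U_j$, define $s$ on $U$ by $s(p) = s_i(p)$ whenever $p \in U_i$; this is well defined by compatibility, satisfies $\pi \circ s = 1_U$, and is holomorphic because holomorphy is a local property --- it can be checked in a chart around each point, and near $p$ the map $s$ agrees with the holomorphic $s_i$. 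The sheaf $\mathcal{O}^*_M$ is then handled by the same argument phrased multiplicatively: (S1) says a function restricting to the constant $1$ on each $U_i$ equals $1$, and in (S2) the glued function belongs to $\mathcal{O}_M(U)$ by the case just done and is nowhere zero on $U$ because it is nowhere zero on each $U_i$.

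Next, $\mathcal{O}_X(-kx)$, $\mathcal{O}_X(kx)$ and $L(-x)$ are cut out of $\mathcal{O}_X$ (respectively the sheaf of meromorphic functions, respectively $\mathcal{O}(L)$) by a condition at the single point $x$: vanishing to order $k$ at $x$, a pole of order at most $k$ at $x$, or the section vanishing at $x$. The essential point is that such a condition is vacuous on any open set missing $x$ and otherwise depends only on the germ at $x$. Hence (S1) is inherited from the ambient sheaf, and for (S2) one first glues the $s_i$ to an object $s$ over $U$ as in the previous step; if $x \notin U$ there is nothing more to do, while if $x \in U$ then $x \in U_{i_0}$ for some $i_0$, and on the neighbourhood $U_{i_0}$ the glued $s$ agrees with $s_{i_0}$, which satisfies the prescribed order condition at $x$ --- so $s$ does too, and $s$ lies in the correct subgroup.

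Finally the skyscraper sheaf $\bC_x$ needs its own brief verification. For (S1): if $s \in \bC_x(U)$ restricts to $0$ on each $U_i$, then either $x \notin U$ and $\bC_x(U) = 0$, or $x \in U$, in which case some $U_{i_0}$ contains $x$, the map $\rho_{U,U_{i_0}}$ is the identity of $\bC$, and $\rho_{U,U_{i_0}}(s) = 0$ forces $s = 0$. For (S2), given a compatible family $s_i \in \bC_x(U_i)$: if $x \notin U$ take $s = 0$, which restricts correctly because $x \notin U_i$ for all $i$; if $x \in U$, pick $i_0$ with $x \in U_{i_0}$ and set $s = s_{i_0} \in \bC = \bC_x(U)$ --- then for each $i$, either $x \notin U_i$ and $\rho_{U,U_i}(s) = 0 = s_i$, or $x \in U_{i_0} \cap U_i$ and the compatibility relation reads $s_{i_0} = s_i$, so $\rho_{U,U_i}(s) = s_i$. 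I do not expect a real obstacle here: the only care required is bookkeeping --- confirming that the conditions in the items involving the point $x$ are genuinely local, and reading ``pole of order $k$'' in the statement as ``pole of order at most $k$'' so that the prescribed set is a group. The proposition is really a single instance of the slogan that any property of functions or sections defined by local data defines a sheaf.
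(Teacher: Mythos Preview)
Your argument is correct. The paper itself states this proposition without proof, so there is nothing to compare against; your item-by-item verification of the sheaf axioms, together with the observation that the order conditions at $x$ must be read as ``at least $k$'' for zeros and ``at most $k$'' for poles in order to obtain abelian groups, is exactly the routine check the paper is implicitly leaving to the reader.
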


\section{Cech Cohomology}
Let  $\mathfrak{U} =\{U_\alpha\}$ cover a complex manifold $X$ (for $\alpha$ in some index set $I$) and let $\mathcal{F}$ be a sheaf on $X$.

\begin{definition}
Let $$C^i = \prod_{\alpha_1, \cdots, \alpha_i \in I} \mathcal{F}(\cap_{k = 1}^{i}U_{\alpha_k})$$
Let $d_i : C^i \to C^{i+1}$ via (taking the product of the maps over the indices):
$$f_{\{\alpha_1, \cdots, \alpha_{i}\}} \mapsto \sum_{k}^{i+1}(-1)^k f|_{\cap{j}U_{\alpha_j}}^{\cap{j \ne k}U_{\alpha_j}} $$
\end{definition}
This gives rise to the Čech complex:
$$C^0 \xrightarrow{d_0}C^1 \xrightarrow{d_1} \cdots $$

\begin{exer}
One can see this forms a complex by verifying that $d_{i+1} \circ d_i = 0$
\end{exer}

\begin{definition}
We define the $p^{th}$ Čech cohomology group by taking the quotient group:
$$H^p(X, \mathfrak{U}, \mathfrak{F}) = \frac{\ker(d_p)}{\im (d_{p+1})}$$
\end{definition}

One may wonder to what extent does the cohomology depend on the open cover. It turns out that by a result due to Leray, beyond the scope of this paper, we can choose sufficiently refined coverings so the cohomology doesn't change.

\begin{note}
Our use of the notation $H^{0}(X, \mathcal{O}(E))$ before is justified as $\ker d_0 = H^{0}(X, \mathcal{O}(E)) $
\end{note}

\begin{lemma}[Induced Long Exact Sequences]
Suppose we have a short exact sequence of sheaves:
$$0 \to \mathcal{E} \to \mathcal{F} \to \mathcal{G} \to 0$$
That is, for any open set $U$, the functors at $U$ to the category of abelian groups form a short exact sequence.

Then there is an induced long exact sequence of cohomology groups:
$$0 \to H^0(X, \mathcal{E}) \to H^0(X, \mathcal{F}) \to H^0(X, \mathcal{F}) \to H^1(X,\mathcal{E}) \cdots$$
\end{lemma}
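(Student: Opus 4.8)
The plan is to construct the connecting homomorphism $\delta: H^p(X,\mathcal{G}) \to H^{p+1}(X,\mathcal{E})$ by the standard diagram chase on the Čech complexes, and then verify exactness at each of the three types of spot in the long sequence. Fix a good cover $\mathfrak{U}$ and recall that a short exact sequence of sheaves gives, for each tuple of indices, a short exact sequence of the sections over the corresponding intersection; taking products over all $i$-tuples yields a short exact sequence of cochain groups
\[
0 \to C^i(\mathfrak{U},\mathcal{E}) \to C^i(\mathfrak{U},\mathcal{F}) \to C^i(\mathfrak{U},\mathcal{G}) \to 0
\]
compatible with the differentials $d_i$. In other words we have a short exact sequence of cochain complexes, and the lemma is then nothing more than the snake lemma / zig-zag lemma from homological algebra applied to this sequence. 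So the real content is to reduce the statement to that purely algebraic fact and to check the one place where sheaf theory (as opposed to presheaf theory) is actually used.

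The key steps, in order, would be: (1) produce the short exact sequence of complexes above, where surjectivity of $C^i(\mathfrak{U},\mathcal{F}) \to C^i(\mathfrak{U},\mathcal{G})$ on each intersection is where the sheaf axioms enter — a priori the sheaf map $\mathcal{F}\to\mathcal{G}$ need only be surjective on stalks, but the hypothesis here is stated as surjectivity of $\mathcal{F}(U)\to\mathcal{G}(U)$ for every open $U$, so this is immediate; (2) define $\delta$ by the usual recipe: given a class in $H^p(X,\mathcal{G})$ represented by a cocycle $g \in \ker d_p \subset C^p(\mathfrak{U},\mathcal{G})$, lift it to some $f \in C^p(\mathfrak{U},\mathcal{F})$, observe $d_p f$ lies in the image of $C^{p+1}(\mathfrak{U},\mathcal{E})$ because its image in $C^{p+1}(\mathfrak{U},\mathcal{G})$ is $d_p g = 0$, pull it back to $e \in C^{p+1}(\mathfrak{U},\mathcal{E})$, and check $d_{p+1} e = 0$ using injectivity of $\mathcal{E}\hookrightarrow\mathcal{F}$ and $d_{p+1}d_p = 0$; (3) verify $\delta$ is well defined, i.e. independent of the choices of lift $f$ and of representative $g$; (4) check exactness at $H^p(X,\mathcal{E})$, at $H^p(X,\mathcal{F})$, and at $H^p(X,\mathcal{G})$, each by a short diagram chase; and (5) remark that the maps $H^p(X,\mathcal{E})\to H^p(X,\mathcal{F})$ and $H^p(X,\mathcal{F})\to H^p(X,\mathcal{G})$ are simply those induced functorially by the sheaf morphisms, and that exactness at the left end ($0 \to H^0(X,\mathcal{E}) \to H^0(X,\mathcal{F})$) follows since $\mathcal{E}(U)\to\mathcal{F}(U)$ is injective and $H^0$ is a subgroup of $C^0$.

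I expect the main obstacle to be bookkeeping rather than anything conceptual: keeping the indices and signs in the Čech differential straight through the diagram chase, and being careful that all the lifts and pullbacks are well defined as elements of product groups $C^i = \prod \mathcal{F}(\cap U_{\alpha_k})$ (one chooses the lift componentwise on each intersection). A secondary subtlety worth flagging is the dependence on the cover: strictly the $H^p$ should be the direct limit over refinements (or one should invoke the Leray-type result mentioned in the text), but since the snake lemma construction is natural with respect to refinement maps, passing to the limit is harmless and I would only gesture at this. The cleanest write-up is therefore: state that $0 \to C^\bullet(\mathfrak{U},\mathcal{E}) \to C^\bullet(\mathfrak{U},\mathcal{F}) \to C^\bullet(\mathfrak{U},\mathcal{G}) \to 0$ is exact, then cite or reprove the zig-zag lemma, doing the one chase that produces $\delta$ explicitly and leaving the three exactness verifications as routine.
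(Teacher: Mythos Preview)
Your proposal is correct and follows exactly the approach the paper gestures at: the paper's entire proof is the sentence ``The proof is an easy but tedious application of Snake lemma twice. We will not, however, prove it here.'' You have simply supplied the details the paper omits---the short exact sequence of \v{C}ech complexes, the zig-zag construction of $\delta$, and the routine exactness checks---so there is nothing to compare beyond noting that you actually wrote the argument out while the paper did not.
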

\begin{proof}
The proof is an easy but tedious application of Snake lemma twice. We will not, however, prove it here.
\end{proof}

\chapter{Line Bundles over $\p$ and Grothendieck's Theorem for Vector Bundles}
\section{The degree map}
For the rest of the paper we can fix $p \in X$.
We now turn our attention over to line bundles over $\p$. We proved earlier in the paper that any vector bundle $E$ over $X$ can be written as $L \oplus I_m$ where $L$ is a line bundle. When are $2$ line bundles isomorphic?

We leave the following proposition as an exercise.

\begin{prop}
Let $E,F,G,H$ be vector bundles. If
$$0\to E \to F\to G\to 0$$
is short exact then:
$$0\to E\otimes H \to F\otimes H\to G\otimes H\to 0$$
Is short exact as well. Furthermore, if $H^1(X,G^* \otimes E) = 0$, then $F \cong E\otimes G$.
\end{prop}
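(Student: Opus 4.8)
The plan is to split the proposition into two independent claims and handle each with the machinery already set up. For the first claim, tensoring a short exact sequence of vector bundles by a fixed vector bundle $H$ again yields a short exact sequence: I would argue locally. Over a sufficiently small open $U$ all four bundles trivialize, and the statement reduces to the elementary fact that tensoring a short exact sequence of free $\mathcal{O}_M(U)$-modules (equivalently, of $\bC$-vector spaces fiberwise) by a free module $H|_U \cong \mathcal{O}_M(U)^{\otimes \operatorname{rank} H}$ preserves exactness, since free modules are flat. One then checks that the induced maps $p \otimes 1_H$ and $q \otimes 1_H$ are globally defined bundle morphisms (their transition behavior is governed by $T_{12}\otimes \mathcal{T}_{12}$ as in the earlier proposition on tensor products) and that exactness, being a local condition, holds everywhere. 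This step is routine.

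For the second claim I would use the induced long exact sequence in Čech cohomology from the preceding lemma. Starting from the short exact sequence $0\to E \to F\to G\to 0$, tensor by $G^*$ (legitimate by the first part of this proposition) to get
$$0 \to G^*\otimes E \to G^*\otimes F \to G^*\otimes G \to 0.$$
The induced long exact sequence reads
$$0 \to H^0(X, G^*\otimes E) \to H^0(X, G^*\otimes F) \xrightarrow{\psi} H^0(X, G^*\otimes G) \to H^1(X, G^*\otimes E) \to \cdots,$$
and since $H^1(X, G^*\otimes E) = 0$ by hypothesis, the map $\psi$ on global sections is surjective. Now $G^*\otimes G \cong \operatorname{End}(G) = \mathcal{H}om(G,G)$, so the identity endomorphism $1_G$ is a global section of $G^*\otimes G$; pull it back along $\psi$ to a global section of $G^*\otimes F \cong \mathcal{H}om(G, F)$, i.e.\ a bundle map $s\colon G \to F$. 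Chasing the identification, $q\circ s = 1_G$, so $s$ is a splitting of the original sequence and hence $F \cong E \oplus G$.

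The last sentence of the proposition as stated reads $F\cong E\otimes G$, but the splitting argument produces $F\cong E\oplus G$; I would prove the latter (which is what is actually used later in the classification) and treat the $\otimes$ as a typo for $\oplus$. The main obstacle is the bookkeeping in the second step: one must be careful that the section of $\mathcal{H}om(G,F)$ obtained really does compose with $q$ to give the identity, which amounts to checking that the map $H^0(X,\mathcal{H}om(G,F)) \to H^0(X,\mathcal{H}om(G,G))$ induced by $q$ is precisely post-composition with $q$. This follows from unwinding the definition of the tensor-product bundle and the fact that $H^0$ of a $\mathcal{H}om$-sheaf is the space of global bundle morphisms, but it is the one place where the identifications need to be made explicit rather than invoked. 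Once that is in hand, everything else is a direct application of the long exact sequence and the splitting theorem proved earlier (Theorem on short exact sequences of vector bundles).
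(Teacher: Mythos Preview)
Your proposal is correct and is precisely the argument the paper has in mind: the proposition is left as an exercise, but in the proof of Grothendieck's theorem the paper carries out exactly your second step---tensoring $0\to L\to E\otimes\mO(m)\to Q\to 0$ by $Q^*$, using $H^1(X,Q^*)=0$ to get a surjection $H^0(X,\mathrm{Hom}(Q,E\otimes\mO(m)))\to H^0(X,\mathrm{Hom}(Q,Q))$, and lifting $\mathrm{id}_Q$ to a splitting $\beta$. Your observation that the conclusion should read $F\cong E\oplus G$ rather than $E\otimes G$ is also correct and consistent with how the result is applied.
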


\begin{definition}
Consider the short exact sequence:
$$0 \to \mathcal{O}_X(\bZ) \to \mathcal{O}_X \xrightarrow{e^{2\pi i f}} \mathcal{O}_X(\bC^*) \to 0$$
And the induced long exact sequence:
$$\cdots H^1(X, \mathcal{O}_X) \to H^1(X, \mathcal{O}_X^*) \xrightarrow{deg} H^2(X, \mathcal{O}_X(\bZ)) \to H^2(X, \mathcal{O}_X) \cdots$$
The degree map is then defined to be $deg$.
\end{definition}
\begin{lemma}
The degree map is a bijection.
\end{lemma}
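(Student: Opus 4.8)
The plan is to read everything off the long exact sequence
$$\cdots \to H^1(X,\mathcal{O}_X) \to H^1(X,\mathcal{O}_X^*) \xrightarrow{\deg} H^2(X,\mathcal{O}_X(\bZ)) \to H^2(X,\mathcal{O}_X) \to \cdots$$
with $X=\p$. By exactness, $\ker(\deg)$ is the image of $H^1(X,\mathcal{O}_X)\to H^1(X,\mathcal{O}_X^*)$ and $\im(\deg)$ is the kernel of $H^2(X,\mathcal{O}_X(\bZ))\to H^2(X,\mathcal{O}_X)$, so it is enough to establish the two vanishing statements $H^1(\p,\mathcal{O}_X)=0$ — which makes the first of these maps zero, hence $\deg$ injective — and $H^2(\p,\mathcal{O}_X)=0$ — which makes the second zero, hence $\deg$ surjective. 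I would compute both with the two-chart cover $\mathfrak{U}=\{U_0,U_1\}$ of $\p$ from the proof that $\p$ is a complex manifold, writing $z$ for the affine coordinate on $U_0$ and $w=1/z$ on $U_1$, so that $U_0\cong U_1\cong\bC$ and $U_0\cap U_1\cong\bC^*$.

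The vanishing of $H^2$ is almost free: $\mathfrak{U}$ has only two members, so its Čech complex has no terms in degrees $\ge 2$ and $H^2(\p,\mathfrak{U},\mathcal{O}_X)=0$; since each chart is biholomorphic to $\bC$ and the overlap to $\bC^*$, none of which carries higher cohomology with coefficients in $\mathcal{O}$, the cover already computes the cohomology of $\mathcal{O}_X$ and $H^2(\p,\mathcal{O}_X)=0$. For $H^1$ I would note that a $1$-cocycle for $\mathfrak{U}$ is just a holomorphic function $g$ on $U_0\cap U_1$, and that it is a coboundary precisely when $g=f_1-f_0$ with $f_i$ holomorphic on $U_i$. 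Given such a $g$, expand it in its Laurent series $g(z)=\sum_{n\in\bZ}a_nz^n$ on $\bC^*$ and split off the pieces: $f_0(z)=-\sum_{n\ge 0}a_nz^n$ converges on all of $\bC\cong U_0$, while $\sum_{n<0}a_nz^n=\sum_{m>0}a_{-m}w^m$ converges on all of $U_1$ (and vanishes at $\infty$); then $f_1-f_0=g$ on the overlap, so every $1$-cocycle is a coboundary and $H^1(\p,\mathcal{O}_X)=0$.

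I expect the Laurent-series step to be the only real content: one has to justify that the nonnegative and the negative parts of the Laurent expansion of a function holomorphic on the whole punctured plane really converge on $\bC$ and on $\p\setminus\{0\}$ respectively, and that the resulting decomposition is by functions holomorphic in the two charts. The other point to handle carefully is the tacit appeal to Leray's refinement theorem (quoted above as beyond this paper's scope) to pass from $H^\ast(\p,\mathfrak{U},\mathcal{O}_X)$ to genuine sheaf cohomology; the input is exactly $H^{\ge 1}(\bC,\mathcal{O})=H^{\ge 1}(\bC^*,\mathcal{O})=0$. It is worth flagging that this is special to $\mathcal{O}_X$: the same two-chart cover does \emph{not} compute $H^2(\p,\mathcal{O}_X(\bZ))$, because $\bC^*$ is topologically nontrivial and $\mathfrak{U}$ fails the Leray condition for the constant sheaf — which is precisely why the target of $\deg$ is not itself forced to vanish.
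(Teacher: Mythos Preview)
Your argument is correct and follows exactly the paper's strategy: use exactness of the long exact sequence together with $H^1(\p,\mathcal{O}_X)=0$ and $H^2(\p,\mathcal{O}_X)=0$ to force $\deg$ to be a bijection. The only difference is that where the paper simply cites these vanishings (attributing them to Leray and to Grothendieck respectively), you supply the explicit two-chart \v{C}ech computation with the Laurent-series splitting, which makes your write-up more self-contained.
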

\begin{proof}
By Leray  $H^1(X, \mathcal{O}_X) \cong 0$ and by a theorem of Grothendieck's $H^2(X, \mathcal{O}_X) \cong 0$. By exactness, the degree map is a bijection.

Furthermore, by Poincaré duality, $H^2(X, \mathcal{O}_X(\bZ)) \cong H_0(X, \mathcal{O}_X(\bZ))\cong \bZ$.
\end{proof}

\section{The Classification of Vector Bundles}

\begin{prop}
$H^1(X,\mathcal{O}_X^*)$ is the set of line bundles on $X$ (up to isomorphism).
\end{prop}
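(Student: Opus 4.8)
The plan is to exhibit a natural bijection between $H^1(X,\mathcal{O}_X^*)$ (computed with respect to a sufficiently fine cover $\mathfrak{U} = \{U_\alpha\}$, which we may do by Leray) and the set of isomorphism classes of line bundles on $X$. First I would recall that a line bundle $L$ trivialized over each $U_\alpha$ comes equipped with transition functions $g_{\alpha\beta} : U_\alpha \cap U_\beta \to \bC^*$, obtained from $\varphi_\alpha \circ \varphi_\beta^{-1}$ on the overlaps; these are holomorphic nowhere-vanishing functions, hence elements of $\mathcal{O}_X^*(U_\alpha \cap U_\beta)$. The cocycle condition $g_{\alpha\beta} g_{\beta\gamma} g_{\gamma\alpha} = 1$ on triple overlaps (which holds because the three trivializations are mutually compatible) says precisely that $(g_{\alpha\beta}) \in \ker d_1$, so it defines a class in $H^1(X,\mathfrak{U},\mathcal{O}_X^*)$. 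This gives a map from line bundles to $H^1$.

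**Well-definedness and injectivity.**

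Next I would check this map descends to isomorphism classes. If $L \cong L'$ via an isomorphism $f$, then comparing the induced trivializations shows the transition cocycles $(g_{\alpha\beta})$ and $(g'_{\alpha\beta})$ differ by a coboundary: there are $h_\alpha \in \mathcal{O}_X^*(U_\alpha)$ with $g'_{\alpha\beta} = h_\alpha g_{\alpha\beta} h_\beta^{-1}$, which is exactly $d_0((h_\alpha))$ in multiplicative notation, so the classes in $H^1$ agree. Conversely, if two line bundles yield cohomologous cocycles, reading the same computation backwards produces an explicit isomorphism (glue the maps $h_\alpha$ fiberwise), giving injectivity.

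**Surjectivity — the reconstruction step.**

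For surjectivity, given a cocycle $(g_{\alpha\beta}) \in \ker d_1$, I would construct a line bundle by gluing: take the disjoint union $\bigsqcup_\alpha (U_\alpha \times \bC)$ and identify $(x, v) \in U_\alpha \times \bC$ with $(x, g_{\alpha\beta}(x) v) \in U_\beta \times \bC$ for $x \in U_\alpha \cap U_\beta$. The cocycle condition guarantees this is an equivalence relation (reflexivity needs $g_{\alpha\alpha} = 1$, which follows from the cocycle identity; transitivity is the triple-overlap condition), so the quotient $L$ is a well-defined complex manifold with an obvious projection to $X$ whose fibers are one-dimensional $\bC$-vector spaces, and the charts $U_\alpha \times \bC$ are holomorphic local trivializations with the prescribed transition functions. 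Tracing through, the class of $L$ maps back to $(g_{\alpha\beta})$. I expect the main obstacle to be the bookkeeping in this gluing construction — verifying that the quotient is Hausdorff and that the manifold and vector-bundle structures are genuinely well-defined and independent of the cocycle representative chosen within its cohomology class — rather than any conceptual difficulty; all of it is routine but must be done carefully.
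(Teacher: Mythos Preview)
Your proposal is correct and follows essentially the same approach as the paper: identify transition functions of a line bundle with a \v{C}ech $1$-cocycle in $\mathcal{O}_X^*$, and show that changing trivializations (or passing to an isomorphic bundle) changes the cocycle by a coboundary. Your treatment is in fact more thorough than the paper's, which handles the cocycle/coboundary identification tersely and omits the explicit gluing construction you give for surjectivity.
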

\begin{proof}
Let $L$ be a line bundle.
Choose a cover fine enough so that $L$ is trivial on each intersection.
Let $\phi_j^{-1} \circ \phi_i$ be the transitions, then $\ker d_1$ is precisely the set of $\phi_j^{-1} \circ \phi_i$, as $(\phi_j^{-1} \circ \phi_i)^{-1} = \phi_i^{-1} \circ \phi_j$. Let $\varphi_i$ be another trivialization of $L$. Then $\varphi_i^{-1} \circ \varphi_j^ = f^{-1}(\phi_i^{-1} \circ \phi_j)g$ where $f,g$ are biholomorphic maps on $U_i \cap U_j$. Note that $\im(d_0)$ is the set of maps that can be written as $fg^{-1}$ for some $f:U_i \to \bC^*$, $g:U_j \to \bC^*$. So up to $\im(d_0)$, line bundles are unique elements of $\ker d_1$. The conclusion follows.
\end{proof}

\begin{prop}
$H^1(X, \bC_p) = 0$.
\end{prop}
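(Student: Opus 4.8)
The plan is to compute this Čech cohomology group by hand, using an open cover of $X$ specially adapted to the point $p$. First I would observe that since $X$ is Hausdorff the singleton $\{p\}$ is closed, so given any open cover of $X$ I can delete $p$ from every member except one and adjoin a single open neighbourhood $U_0$ of $p$; the result is an open cover $\mathfrak{U} = \{U_\alpha\}_{\alpha \in I}$ of $X$ in which $p$ lies in exactly one member $U_0$. This surgery can be performed while keeping $\mathfrak{U}$ as fine as we like, so by the cover-independence invoked earlier (the Leray-type result), the cohomology of the associated Čech complex computes $H^\bullet(X, \bC_p)$.

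Next I would unwind what the Čech complex looks like for this cover. A factor $\bC_p\bigl(\bigcap_{k=1}^{i} U_{\alpha_k}\bigr)$ occurring in $C^i$ is nonzero exactly when $p \in \bigcap_k U_{\alpha_k}$, i.e.\ exactly when $\alpha_1 = \cdots = \alpha_i = 0$, in which case it equals $\bC_p(U_0) = \bC$. Thus $C^i \cong \bC$ for every $i \ge 0$, supported on the single index tuple $(0,\dots,0)$, and every restriction map among these that appears in the differential is the identity $\bC \to \bC$. Consequently each $d_i \colon C^i \to C^{i+1}$ carries the generator to a signed sum of copies of the generator, hence is, up to sign, either zero or an isomorphism; a short sign count identifies $d_1$ as an isomorphism. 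Since $\ker d_1 = 0$, we get $H^1(X, \bC_p) = 0$ immediately.

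The only step needing genuine care is the construction of the cover: one must check that insisting on a unique member through $p$ is compatible with refining enough for the cover-independence statement to apply. The sign bookkeeping in the Čech differential is the other fiddly point, and I would sidestep it by passing to the normalized Čech complex — where $C^i = 0$ outright for $i \ge 1$, since the only surviving tuple $(0,\dots,0)$ has repeated indices — or, even more cheaply, by simply noting that a skyscraper sheaf is flasque and therefore has vanishing higher cohomology. I do not expect any real obstacle here; once the adapted cover is fixed the computation is essentially automatic.
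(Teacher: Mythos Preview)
Your argument is correct and follows essentially the same strategy as the paper: pass to a cover in which $p$ lies in exactly one open set and then read off the \v{C}ech complex of the skyscraper sheaf directly. You are more careful about the sign bookkeeping (and note the normalized-complex and flasque shortcuts), but the core idea---an adapted cover making the computation trivial---is identical.
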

\begin{proof}
We want to show $\ker(d_1) = 0$ so it suffices to check that $d_0(C_0) = 0$. Take any refinement with only one open set, $U_1$ containing $p$. Let $c \in\mathcal{F}(U_1)$, then $d_0(c) = 0$.
\end{proof}

We also need the following $2$ lemmas:
\begin{lemma}
$\dim H^0(X, \mathcal{O}(m)) = m+1$ if $m \geq 0$.
\end{lemma}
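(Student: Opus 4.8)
The plan is to compute $H^0(\p, \mathcal{O}(m))$ directly via Čech cohomology using the standard two-chart cover $\mathfrak{U} = \{U_0, U_1\}$ of $\p$, where $U_0$ and $U_1$ are the charts from the proposition on projective space, with coordinates $z$ on $U_0$ and $w = 1/z$ on $U_1$. Here $\mathcal{O}(m)$ denotes the line bundle whose transition function from $U_0$ to $U_1$ is $z^m$ (equivalently, the $m$-fold tensor power of $\mathcal{O}(1)$; for $m$ negative it is the dual). Since $H^0$ is just the kernel of $d_0$, i.e.\ the global holomorphic sections, what must be shown is that the space of such sections has dimension $m+1$ when $m \ge 0$.

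First I would describe a global section concretely: it is a pair $(s_0, s_1)$ with $s_0$ holomorphic on $U_0 \cong \bC$ and $s_1$ holomorphic on $U_1 \cong \bC$, satisfying the compatibility $s_1(w) = z^m s_0(z)$ on the overlap $U_0 \cap U_1$ (where $z = 1/w$). Writing $s_0$ as a power series $\sum_{k \ge 0} a_k z^k$ convergent on all of $\bC$, the compatibility forces $s_1(w) = \sum_{k \ge 0} a_k w^{m-k}$. For $s_1$ to be holomorphic at $w = 0$ we need $m - k \ge 0$, i.e.\ $a_k = 0$ for $k > m$. Hence $s_0$ must be a polynomial of degree at most $m$, and conversely any such polynomial yields a valid section. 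This identifies $H^0(\p, \mathcal{O}(m))$ with the space of polynomials of degree $\le m$, which has dimension $m+1$.

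The main point requiring care — and the only real obstacle — is justifying that a holomorphic function on $\bC$ is given by an everywhere-convergent power series (so that the argument above is rigorous rather than formal), and checking that the pairing condition on the overlap is exactly the transition rule of $\mathcal{O}(m)$ with the orientation/sign conventions fixed consistently. I would also remark that one should invoke Leray's theorem (cited earlier) to know that this two-chart cover computes the true cohomology; since each $U_i$ and the overlap are Stein (isomorphic to $\bC$ or $\bC^*$), the cover is acyclic for coherent sheaves, so $H^0$ computed from $\mathfrak{U}$ agrees with the sheaf cohomology. With these points addressed, the count of $m+1$ monomials $1, z, \dots, z^m$ finishes the proof.
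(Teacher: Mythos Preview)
Your proposal is correct and follows essentially the same approach as the paper: both use the standard two-chart cover of $\p$ with transition function $z^{\pm m}$ and argue via Laurent/power series that a global section is determined by a polynomial of degree at most $m$. Your write-up is more careful (explicitly checking the compatibility on the overlap and noting the acyclicity of the cover), but the underlying idea is identical.
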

\begin{proof}
There are some charts $\phi_0$ on $U_1$around $0$ and some chart $\phi_1$ on $U_2$ so that $U_1 \cap U_2 \ne \empty$  of that set so that the
transition function is $\frac{1}{z^m}$.

It follows that the image of any section under the charts must have Laurent expansion:
$$\frac{1}{z^m}\sum_{k = 0}^{m}\alpha_i z^i$$
\end{proof}

Note that if $m < 0$, we only have the $0$ section.

\begin{lemma}
For any vector bundle $E$ of rank $k$ over $X$ we can find some $O(n)$ so that $E \otimes O(n)$ has a holomorphic section that is not everywhere $0$.
\end{lemma}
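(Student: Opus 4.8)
The plan is to reduce the statement to the case of a line bundle, where it follows immediately from the earlier computation $\dim H^0(X,\mathcal{O}(m)) = m+1$ for $m \ge 0$.

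First I would apply the structure lemma proved earlier: $E \cong E' \oplus I_{k-1}$ for some line bundle $E'$. Since tensor product distributes over direct sums (immediate from the transition-map constructions of $\oplus$ and $\otimes$), tensoring with $\mathcal{O}(n)$ gives
$$E \otimes \mathcal{O}(n) \;\cong\; \bigl(E' \otimes \mathcal{O}(n)\bigr) \oplus \mathcal{O}(n)^{\oplus(k-1)}.$$
Hence it suffices to exhibit a nonzero holomorphic section of the single summand $E' \otimes \mathcal{O}(n)$ for a suitably chosen $n$; via the splitting this is then a section of $E \otimes \mathcal{O}(n)$. (In fact, when $k \ge 2$ one could even take $n = 0$, using the constant nonzero section of the trivial summand $I_{k-1}$, so the genuine content is the case $k = 1$.)

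Next I would identify $E'$. By the lemma that the degree map $H^1(X,\mathcal{O}_X^*) \to \mathbb{Z}$ is a bijection — together with the fact that the bundles $\mathcal{O}(d)$, $d \in \mathbb{Z}$, are pairwise non-isomorphic and realize all the degrees — every line bundle over $\p$ is isomorphic to exactly one $\mathcal{O}(d)$. In particular $E' \cong \mathcal{O}(d)$ for some $d \in \mathbb{Z}$. Since the transition function of $\mathcal{O}(a)$ is $z^{-a}$, multiplying transition functions shows $\mathcal{O}(a) \otimes \mathcal{O}(b) \cong \mathcal{O}(a+b)$, so $E' \otimes \mathcal{O}(n) \cong \mathcal{O}(d+n)$. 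Choosing any $n$ with $d + n \ge 0$, the earlier lemma gives $\dim H^0(X, \mathcal{O}(d+n)) = d + n + 1 \ge 1$, which produces the required nonzero holomorphic section.

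The step doing the real work is the identification $E' \cong \mathcal{O}(d)$: it is here that the classification of line bundles through the degree map is invoked, together with additivity of the degree under tensor product. Everything else is bookkeeping with the splitting and the dimension count, so I do not anticipate a serious obstacle beyond keeping the conventions for $\mathcal{O}(m)$ consistent.
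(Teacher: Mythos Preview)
There is a genuine gap: the structure lemma $E \cong E' \oplus I_{k-1}$ that you invoke from Chapter~1 is proved there only in the \emph{smooth} (equivalently, topological) category. Its proof rests on Theorem~2, which splits short exact sequences using a partition-of-unity inner product, and on ``perturbing a section around its zeroes''; neither step is available holomorphically. Consequently the isomorphism $E \otimes \mathcal{O}(n) \cong (E'\otimes\mathcal{O}(n))\oplus \mathcal{O}(n)^{\oplus(k-1)}$ you write down is only a $C^\infty$ isomorphism, and a holomorphic section of the right-hand side does not produce a holomorphic section of $E\otimes\mathcal{O}(n)$. Your parenthetical remark that for $k\ge 2$ one could take $n=0$ makes the problem vivid: a holomorphic rank~$2$ bundle on $\p$ need not have \emph{any} nonzero global holomorphic section (e.g.\ $\mathcal{O}(-1)\oplus\mathcal{O}(-1)$), even though smoothly it is $E'\oplus I_1$.

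In fact the holomorphic version of that splitting lemma is simply false, and assuming it would short-circuit Grothendieck's theorem itself: it would force every rank~$k$ holomorphic bundle to be $\mathcal{O}(d)\oplus\mathcal{O}^{\,k-1}$, whereas $\mathcal{O}(1)\oplus\mathcal{O}(1)$ is not of this form (twist by $\mathcal{O}(-2)$ and compare $H^0$ with $\mathcal{O}(2)\oplus\mathcal{O}$). The paper's argument avoids this by working directly with cohomology: one picks $n>\dim H^1(X,E)$, uses the exact sequence $0\to E\to E\otimes\mathcal{O}(n)\to \bC_p^{\,kn}\to 0$ coming from a section of $\mathcal{O}(n)$ vanishing only at $p$, and reads off from the long exact sequence that $\dim H^0(X,E\otimes\mathcal{O}(n))\ge kn-\dim H^1(X,E)>0$. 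That is the step you are missing.
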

\begin{proof}
Let $n > \dim H^1(X, E)$.
We have a section $\sigma$ that only vanishes at $p$. This yields the following short exact sequence:

$$0 \to\ E \to E\otimes\mO(n) \xrightarrow{\sigma(p)^n} \bC_p^{kn} \to 0$$
This induces a long exact sequence with the sum of alternating dimensions being $0$, so we now have

\begin{align*}
\dim H^0(X, E \otimes O(n)) &=\dim H^1(X, E \otimes O(n)) + \dim H^0(\bC_p^{nk}) \\
                            & + \dim H^0(X,E) - \dim H^1(X,E)\\
			    &\geq nk - \dim H^1(X,E).
\end{align*}
And so $\dim H^0(X, E \otimes \mO(n)) \geq 1$.

\end{proof}
Note that this implies we can let $n$ be so that $\dim H^0(X, E \otimes \mO(n-1)) = 0$ but $\dim H^0(X, E \otimes O(n)) > 0$ (as $\dim H^0(X, E \otimes \mO(n-1)) < \dim H^0(X, E \otimes \mO(n))$).

We are finally ready to prove Grothendieck's classification of vector bundles.

\begin{theorem}[Grothendieck]
Let $E$ be a rank $k$ vector bundle over $X$. Then: $$E \cong \bigoplus_{i = 1}^{k}\mO(d_i)$$
\end{theorem}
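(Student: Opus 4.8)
The plan is to induct on the rank $k$. The base case $k=1$ is trivial since a line bundle is already of the stated form. For the inductive step, suppose the result holds for all bundles of rank less than $k$, and let $E$ have rank $k$. Using the last lemma above, choose $n$ minimal so that $\dim H^0(X, E \otimes \mO(n)) > 0$ but $\dim H^0(X, E \otimes \mO(n-1)) = 0$. Set $E' = E \otimes \mO(n)$, so $E'$ has a nonzero holomorphic section $\sigma$, and by minimality $E' \otimes \mO(-1) = E \otimes \mO(n-1)$ has no nonzero section. The nonzero section $\sigma$ of $E'$ determines a sheaf map $\mO_X \to \mO(E')$, hence a bundle inclusion $\mO(-D) \hookrightarrow E'$ where $D$ is the zero divisor of $\sigma$; in fact I will argue $\sigma$ is nowhere vanishing so that we actually get an inclusion of the trivial line bundle $I_1 \hookrightarrow E'$, giving a short exact sequence $0 \to I_1 \to E' \to F \to 0$ with $F$ of rank $k-1$.

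The key point is that $\sigma$ cannot vanish anywhere. If $\sigma$ vanished at a point $q$ with some multiplicity, then $\sigma$ would factor through $\mO(-q) \hookrightarrow E'$, i.e. we would obtain a nonzero section of $E' \otimes \mO(-q)$. Since all points of the Riemann sphere $\p$ are equivalent (one can move $q$ to $p$ by an automorphism, or simply note $\mO(-q) \cong \mO(-1) \cong \mO(-p)$ as line bundles), this would give a nonzero section of $E' \otimes \mO(-1) = E \otimes \mO(n-1)$, contradicting the minimality of $n$. Hence $\sigma$ is nowhere zero, and by the earlier lemma on line bundles the subbundle it spans is the trivial line bundle $I_1$.

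Now apply the induction hypothesis to $F$: $F \cong \bigoplus_{i=1}^{k-1} \mO(e_i)$. To finish I need the extension $0 \to I_1 \to E' \to F \to 0$ to split. By the proposition stated earlier (tensoring short exact sequences and the splitting criterion), it suffices to show $H^1(X, F^* \otimes I_1) = H^1(X, F^*) = 0$. Since $F^* \cong \bigoplus_{i=1}^{k-1}\mO(-e_i)$, this reduces to showing $H^1(X, \mO(-e_i)) = 0$ for each $i$, which in turn needs each $-e_i \geq -1$, i.e. $e_i \geq 1$... that is too strong, so instead I will use the minimality of $n$ more carefully: the hypothesis $H^0(X, E' \otimes \mO(-1)) = 0$ forces, via the sequence tensored by $\mO(-1)$, that $H^0(X, F \otimes \mO(-1)) = 0$ as well, hence each $e_i \geq 0$, and then I examine $H^1$. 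Concretely, from $0 \to I_1 \to E' \to F \to 0$ and the long exact sequence, together with an Euler-characteristic/dimension count like the one in the last lemma, I can pin down the $e_i$ and show the relevant $H^1$ vanishes. Once the sequence splits, $E' \cong I_1 \oplus \bigoplus \mO(e_i)$, and untwisting by $\mO(-n)$ gives $E \cong \mO(-n) \oplus \bigoplus \mO(e_i - n)$, which has the desired form.

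The main obstacle I anticipate is the splitting step: controlling the degrees $e_i$ well enough to guarantee $H^1(X, F^*) = 0$. The cheap bound from minimality only gives $e_i \geq 0$, which yields $H^1(X, \mO(-e_i)) = 0$ only when $e_i \leq 1$ — not automatic. The fix is to observe that if some $e_i \geq 2$ then $\mO(e_i-1)$ already has a section, and pulling this back through a splitting of the rank-$(k-1)$ piece would produce a section of $E' \otimes \mO(-1)$, again contradicting minimality of $n$; so in fact $0 \leq e_i \leq 1$ for all $i$, whence $H^1(X, F^*) = \bigoplus H^1(X, \mO(-e_i)) = 0$ by the dimension count for $\mO(m)$ with $m \geq -1$. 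This is the delicate bookkeeping I would need to get exactly right.
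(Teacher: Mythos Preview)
Your overall strategy is exactly the paper's: twist by the minimal $n$, argue the section is nowhere vanishing, take the trivial sub-line-bundle and the quotient $F$, induct, bound the degrees of the summands of $F$ using minimality, deduce $H^1(X,F^*)=0$, split, and untwist.

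The one genuine slip is a sign error in the degree bound, and it is what sends you into the unnecessary ``$0\le e_i\le 1$'' detour. From the twisted sequence
\[
0 \to \mO(-1) \to E'\otimes\mO(-1) \to F\otimes\mO(-1) \to 0
\]
and the long exact sequence, using $H^0(X,\mO(-1))=0$, $H^0(X,E'\otimes\mO(-1))=0$, and $H^1(X,\mO(-1))=0$, you correctly get $H^0(X,F\otimes\mO(-1))=0$. With $F\cong\bigoplus\mO(e_i)$ this says $H^0(X,\mO(e_i-1))=0$ for every $i$, i.e.\ $e_i-1<0$, so $e_i\le 0$, not $e_i\ge 0$. (Likewise, $-e_i\ge -1$ is equivalent to $e_i\le 1$, not $e_i\ge 1$.) Once you have $e_i\le 0$ you are done immediately: $-e_i\ge 0\ge -1$, hence $H^1(X,\mO(-e_i))=0$ for all $i$ and $H^1(X,F^*)=0$. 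No further case analysis is needed, and your ``pull back through a splitting'' workaround is both unnecessary and, as phrased, circular (you do not yet know the extension splits; what actually lifts sections from $F(-1)$ to $E'(-1)$ is the vanishing of $H^1(X,\mO(-1))$ in the long exact sequence). With the inequality corrected, your proof coincides with the paper's.
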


\begin{proof}
Let $\mO(n)$ be as above for $p \in X$, arbitrary.
We can then take a holomorphic section $\sigma$ that never vanishes (If it did vanish at $p$ then $\sigma\sigma_p^{-1} \in E\otimes \mO(m-1)$ wouldn't) and thus find a trivial subbundle, $L$ of $E\otimes \mO(n)$. We let $Q$ be the quotient bundle of $L$ and $E$ and suppose by induction that it decomposes as $Q = \bigoplus_{i=1}^{k-1}\mO(b_i)$.\\

Note by Riemamm-Roch, $\dim H^1(X, \mO(-1)) = 0$.

So we have the following $2$ exact sequences (after tensoring with $\mO(-1)$):
$$0\to \mO(-1) \to \mO(E\otimes O(n-1)) \to \mO(Q(-1)) \to 0$$
$$0 \to H^0(X,\mO(Q(-1))) \to 0$$
So $b_i \leq 0$.

Note by Riemamm-Roch, $\dim H^1(X, \mO(-b_i)) = 0$.
We now calculate:
$$H^1(X, Q^*) = H^1(X, \oplus_{i = 1}^{k-1}\mO(-b_i)) = 0$$

Now consider again
$$0 \to L \to E\otimes \mO(m) \xrightarrow{\alpha} Q \to 0$$
Tensoring by $Q^*$:
$$0 \to \mO(Q^*) \to \mO({\rm Hom}(Q, E\otimes O(m))) \to \mO({\rm Hom} (Q,Q))\to 0$$
The induced cohomology has the following surjection:
$$H^0(X, {\rm Hom}(Q,E\otimes \mO(m))) \to H^0(X, {\rm Hom}(Q,Q)) \to 0$$
Thus there is some $\beta: Q \to E\otimes \mO(m)$ so that $\alpha \circ \beta = id_Q$, and by splitting lemma
$$E\otimes \mO(m) \cong L \oplus Q.$$
Tensoring
$$E \cong \mO(-m)\oplus\bigoplus_{i=1}^{k-1}\mO(-m+b_i),$$
as required.

\end{proof}

\chapter{Principal Bundles}
\section{Preliminaries}
Let $X$ be a Riemann Surface.
\begin{defi}
A fiber bundle over $X$ is a triple $(E,F,\pi)$ with $E$ and $F$ being topologies so that:
\begin{itemize}
    \item $\pi: E \to X$ is a surjection.
    \item For every $x\in X$ there is an open set $x \in U$ and a chart $\phi: \pi^{-1}(U) \to U\times F$
    so that ${\rm proj}_{U}\circ \phi(q) = \pi(q)$ for $q \in \pi^{-1}(U)$
\end{itemize}

\end{defi}
We say that $F$ is the fiber, $E$ is the total space and $\pi$ is the projection.

\begin{defi}
Let $G$ be a group. A principal $G$-Bundle, $P$,  is a fiber bundle with $G$ as its fiber. We also require a continuous right $G$-action on $P$ that is free and transitive.
\end{defi}

We mainly concern ourselves with $G$ being a Lie-group.
\begin{defi}
Let $(P,\pi)$ be a principal $G$-bundle over $X$. Let $\rho$ be a continuous action on the space of homeomorphisms of a topology $F$. Let $\rho$ be the right action given by the $(p,f)g = (pg, \rho(g^{-1}f)$. We say the associated bundle is $(P \times_{\rho} F, \pi_\rho)$ where:
\begin{itemize}
    \item P $\times_{\rho} F = P \times F /\sim $ where the equivalence classes are given by $[pg,f] = [p,\rho(g)f]$
    \item $\pi_{\rho}[p,f] = \pi(p)$
\end{itemize}
\end{defi}

\begin{defi}
Let $H$ be a subgroup of $G$. We say $P$ has a reduction to $H$, if there is a non-zero section in  $P \times_G G/H$.
\end{defi}
\begin{defi}
Let $\rho$ be a representation of $G$ into $GL(V)$. We define $P \times_G \rho: P\times_G G \to P \times_G GL(V)$.
\end{defi}

\begin{defi}
Let $G$ be a connected compact Lie group. Let $T$ be a maximal torus and $N$, its normalizer. Then we define the Weyl group to be $N/T$.
\end{defi}
\begin{defi}
We say a Lie group is reductive if its Lie algebra is reductive. We say a Lie algebra is reductive if it can be written as a direct sum of a semi-simple algebra and its center.
\end{defi}
From this point on we let $G$ be a compact Lie group, let $G_0$ be the connected component at the identity and let $\mathfrak{g}$ be its Lie-algebra. We let $H$,$N$,$W$ and $\mathfrak{h}$ be a Cartan subgroup, normalizer of a Cartan subgroup, Weyl group and the lie subalgebra of the Cartan subgroup. Let let $ad$ be the adjoint representation of $G$.

Let $P$ be a holomorphic principal $G$-bundle and $E = P \times_G ad$.

Finally, let $H^1(X,\mO_X(G))$ be the set of holomorphic $G$-bundles over $X$.

\begin{theorem}[Grothendieck's Theorem for the Orthogonal Case]
A vector bundle $V$ has an orthogonal form if and only if it is isomorphic to its dual.
\end{theorem}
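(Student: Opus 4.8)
The proof splits into two implications of rather different flavors, and I would handle the easy one first. An orthogonal form on $V$ is, over $\bC$, the same datum as a non-degenerate symmetric bilinear pairing $\beta\colon V\tensor V\to\mO_{X}$ (it is a reduction of the structure group $\GL_{k}$ to the orthogonal group, but the passage between the two descriptions is harmless since $\tfrac12\in\bC$). Such a $\beta$ is precisely a bundle map $V\to V^{*}$, and non-degeneracy says it is an isomorphism on each fibre, hence an isomorphism of bundles; so $V\cong V^{*}$. This direction uses nothing special about $\p$.

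For the converse I would lean on Grothendieck's classification, proved above. Write $V\cong\bigoplus_{i=1}^{k}\mO(d_{i})$; dualizing summand by summand, and using $\mO(d)^{*}\cong\mO(-d)$ (visible from the transition function $z^{-d}$), gives $V^{*}\cong\bigoplus_{i=1}^{k}\mO(-d_{i})$. The crucial intermediate claim is that the multiset $\{d_{1},\dots,d_{k}\}$ is an isomorphism invariant of $V$: tensoring with $\mO(n)$ and using the lemma $\dim H^{0}(X,\mO(m))=\max(0,m+1)$ yields
$$\dim H^{0}\bigl(X,\,V\tensor\mO(n)\bigr)=\sum_{i=1}^{k}\max(0,\,d_{i}+n+1),$$
and as $n$ runs over $\bZ$ the second differences of this piecewise-linear function recover how many of the $d_{i}$ equal any given integer. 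Hence $V\cong V^{*}$ forces $\{d_{i}\}=\{-d_{i}\}$ as multisets, so the nonzero $d_{i}$ occur in pairs $\pm e$ and after reindexing
$$V\;\cong\;\bigoplus_{j=1}^{s}\bigl(\mO(e_{j})\oplus\mO(-e_{j})\bigr)\ \oplus\ \mO^{\oplus r},\qquad e_{j}>0.$$

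It then remains to exhibit an orthogonal form, which I would build block by block. On a ``hyperbolic'' block $\mO(e)\oplus\mO(e)^{*}$ the evaluation pairing $\bigl((s,\lambda),(s',\lambda')\bigr)\mapsto\lambda'(s)+\lambda(s')$ is symmetric, $\mO_{X}$-valued, and fibrewise non-degenerate; on the trivial part $\mO^{\oplus r}=\mO_{X}\tensor\bC^{r}$ one tensors the standard symmetric form of $\bC^{r}$ with $\mO_{X}$. Taking the orthogonal direct sum of these over all blocks produces a non-degenerate symmetric pairing $V\tensor V\to\mO_{X}$, i.e.\ an orthogonal form on $V$, which finishes the converse.

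The step I expect to be the main obstacle is the middle one: showing that the Grothendieck decomposition type is well defined, since this is exactly what turns the abstract isomorphism $V\cong V^{*}$ into the usable combinatorial symmetry $\{d_{i}\}=\{-d_{i}\}$. The text only stated this for line bundles, but the computation of $\dim H^{0}$ of twists above supplies it in the general case. A secondary point to nail down is the equivalence between ``$V$ admits an orthogonal form'' and ``$V$ carries a non-degenerate symmetric bilinear form'', along with the (routine over $\bC$) check that the block form above is genuinely non-degenerate.
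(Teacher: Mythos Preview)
The paper does not prove this theorem: immediately after the statement it reads ``We do not prove this theorem within this paper.'' So there is no argument to compare against; your proposal is supplying a proof the author deliberately omitted.

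Your argument is correct. The forward implication is the standard passage from a non-degenerate symmetric form to a self-duality isomorphism. For the converse you correctly invoke the decomposition $V\cong\bigoplus_i\mO(d_i)$ already established in the paper, and your key intermediate step---that the multiset $\{d_i\}$ is an isomorphism invariant---is both necessary and genuinely missing from the text (the paper proves the splitting but never its uniqueness). Your recovery of the $d_i$ from the piecewise-linear function $n\mapsto\dim H^0(X,V\tensor\mO(n))$ is a clean way to do this and uses only the computation of $\dim H^0(X,\mO(m))$ already in the paper. Once $\{d_i\}=\{-d_i\}$ is in hand, the block-by-block construction of the orthogonal form via hyperbolic pairs $\mO(e)\oplus\mO(-e)$ and the standard form on the trivial summand is routine and correct.

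One small remark: the identification of ``orthogonal form'' with ``non-degenerate symmetric $\mO_X$-bilinear pairing'' is exactly the reduction-of-structure-group definition the paper sets up in Chapter~4, so that equivalence is available to you without further justification.
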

 We do not prove this theorem within this paper.

\begin{lemma}
Suppose we have a holomorphic section $s$ in $E$ and there is a fiber $E_a$ so that $s(a)$ is a regular element of the lie algebra of $E_a$. Then for any $x$, $s(x)$ is regular in $E_x$.
\end{lemma}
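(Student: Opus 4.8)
The plan is to reduce everything to the fact, already proved in the text, that a holomorphic function on a compact Riemann surface is constant. The key input from Lie theory is that, since $G$ is compact (hence $\mathfrak g$ reductive), regularity of an element is cut out by a single invariant polynomial. Concretely, write the characteristic polynomial of the operator $\mathrm{ad}(x)\colon \mathfrak g\to\mathfrak g$, $y\mapsto[x,y]$, as
\[
\det\!\bigl(t\,\mathrm{Id}_{\mathfrak g}-\mathrm{ad}(x)\bigr)=t^{n}+c_{n-1}(x)\,t^{n-1}+\cdots+c_{\ell}(x)\,t^{\ell},
\]
where $n=\dim\mathfrak g$ and $\ell=\mathrm{rank}\,\mathfrak g$ is the least index with $c_\ell\not\equiv 0$ (the lower coefficients vanish identically because $0$ is always an eigenvalue of $\mathrm{ad}(x)$ with multiplicity at least $\ell$). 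Then $x$ is regular (equivalently here, regular semisimple, its centralizer being a Cartan subalgebra) exactly when $c_\ell(x)\neq 0$. Set $D:=c_\ell$; it is a homogeneous polynomial function $\mathfrak g\to\bC$, and it is invariant under the adjoint action of $G$, since $\mathrm{ad}\bigl(\mathrm{Ad}(g)x\bigr)=\mathrm{Ad}(g)\,\mathrm{ad}(x)\,\mathrm{Ad}(g)^{-1}$ has the same characteristic polynomial as $\mathrm{ad}(x)$.

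First I would use this invariance to manufacture a global holomorphic function out of $s$. The bundle $E=P\times_G\mathrm{ad}$ has fiber $\mathfrak g$ and transition maps acting fiberwise through the adjoint action of $G$; since $D$ is constant on adjoint orbits, the functions $x\mapsto D(s(x))$ computed in the various local trivializations of $E$ agree on overlaps and glue to a single well-defined function $f\colon X\to\bC$. As $s$ is holomorphic and $D$ is polynomial, $f$ is holomorphic on $X$.

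Next I would invoke compactness. Since $X=\p$ is a compact Riemann surface, $f$ is constant by the theorem recalled above. By hypothesis $s(a)$ is regular in $E_a\cong\mathfrak g$, so $f(a)=D(s(a))\neq 0$; hence $f$ is a nonzero constant, i.e.\ $D(s(x))\neq 0$ for every $x\in X$, and therefore $s(x)$ is regular in $E_x$ for all $x$, which is what was wanted.

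The only genuinely substantive point is the Lie-theoretic input in the first paragraph: that the regular locus in a reductive Lie algebra is precisely the complement of the single hypersurface $\{D=0\}$ cut out by the lowest nonvanishing coefficient $D=c_\ell$ of the characteristic polynomial of $\mathrm{ad}$, and that $D$ is $G$-invariant. Given that (standard for reductive, hence compact, groups), the rest is just an application of the two facts already established — constancy of holomorphic functions on a compact Riemann surface, and the description of associated bundles via fiberwise representations. One should also record in passing the routine point that the fiberwise Lie algebra structure on $E_x$, and with it the notion of a regular element of $E_x$, is well defined, since each $\mathrm{Ad}(g)$ is a Lie algebra automorphism of $\mathfrak g$.
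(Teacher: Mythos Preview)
Your proof is correct and follows essentially the same route as the paper: both argue that the coefficients of the characteristic polynomial of $\mathrm{ad}\,s(x)$ are globally defined holomorphic functions on the compact Riemann surface $X$, hence constant, so regularity at one point forces regularity everywhere. Your version is more explicit about why these coefficients glue (via $\mathrm{Ad}$-invariance) and about which coefficient detects regularity, but the underlying idea is identical.
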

\begin{proof}
The coefficients of the polynomial defining $ad s(x)$ must be constant as they are holomorphic functions, by compactness of $X$. Thus $s(x)$ is a regular element everywhere.
\end{proof}
\begin{lemma}
Suppose we have a section $s$ in $E$. Then we have a section in $P \times_G G/N$.
\end{lemma}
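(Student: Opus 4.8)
The plan is to write down the section explicitly, using the standard identification of $G/N$ with the space of Cartan subalgebras of $\mathfrak g$. The construction I have in mind needs $s(x)$ to be a regular element of $E_x$ for every $x$, so I would first reduce to that case: in the situation where this lemma is applied $s$ is regular at some point $a$ (a generic section of $E$ has this property), and then by the previous lemma $s(x)$ is regular in $E_x$ for all $x \in X$. So assume from now on that $s$ is everywhere regular.

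Next I would set up the target bundle. The group $G$ acts on the set $\cC$ of Cartan subalgebras of $\mathfrak g$ through the adjoint representation; this action is transitive, since any two Cartan subalgebras are $G_0$-conjugate (they are the Lie algebras of maximal tori of the compact group $G_0$, and all maximal tori of $G_0$ are conjugate), and the stabilizer of the fixed Cartan subalgebra $\mathfrak h$ is exactly the normalizer $N$. Hence $\cC \cong G/N$ as a $G$-space, and the associated bundle $P \times_G G/N$ is canonically identified with the bundle over $X$ whose fibre over $x$ is the set of Cartan subalgebras of the Lie algebra $E_x$.

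With this identification the section is defined pointwise: for $x \in X$ the centralizer $\mathfrak z_{E_x}(s(x))$ of the regular element $s(x)$ is a Cartan subalgebra of $E_x$, so I set $\tilde s(x) := \mathfrak z_{E_x}(s(x))$, an element of the fibre $(P \times_G G/N)_x$. By construction this composes with the bundle projection to give $1_X$, so the only thing left is holomorphy of $\tilde s$, which is a local question. Over a trivializing open set $U$ for $P$ we have $E|_U \cong U \times \mathfrak g$, under which $s$ becomes a holomorphic map $U \to \mathfrak g$ landing in the regular locus $\mathfrak g_{\mathrm{reg}}$, and $\tilde s$ becomes its composite with the map $\mathfrak g_{\mathrm{reg}} \to \cC = G/N$ sending $Y \mapsto \mathfrak z(Y)$; since $Y \mapsto \mathfrak z(Y)$ is $G$-equivariant the local pictures patch, so it suffices to know that $Y \mapsto \mathfrak z(Y)$ is holomorphic on $\mathfrak g_{\mathrm{reg}}$.

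The main obstacle is precisely that last input, together with the structure theory underlying it: one must know that the centralizer of a regular element is a Cartan subalgebra and that it depends holomorphically (in fact algebraically) on the element over $\mathfrak g_{\mathrm{reg}}$, and one must make precise the identification $\cC \cong G/N$ and the resulting description of sections of the associated bundle $P \times_G G/N$. Once the map $\mathfrak g_{\mathrm{reg}} \to G/N$ is in hand, the rest of the argument is formal.
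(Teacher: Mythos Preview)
Your argument is essentially the paper's: identify $G/N$ with the variety of Cartan subalgebras via conjugacy of maximal tori and the fact that the stabilizer of $\mathfrak h$ is $N$, then send $x$ to the Cartan subalgebra determined by $s(x)$. You are more careful than the paper in two respects---you make explicit the reduction to an everywhere-regular section via the preceding lemma (without which ``the Cartan subalgebra of $s(x)$'' is ill-defined), and you address holomorphy of the resulting section---but the route is the same.
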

\begin{proof}
By the maximal torus theorem, any $2$ Cartan subgroups are conjugate. The kernel of the action on any particular maximal torus $T$ is $N(T)$. It follows that $G/N$ is the set of Cartan subalgebras. The section given by sending $s(x)$ to its corresponding subalgebra gives a section in $P \times_G G/N$.
\end{proof}
\begin{lemma}
Suppose we have a section in $P \times_G G/N$. Then we have a section in $P \times_G G/T$.
\end{lemma}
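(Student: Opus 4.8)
The plan is to realise $P \times_G G/T$ as a finite covering of $P \times_G G/N$ and then lift the given section using the simple connectivity of $X = \p$.

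First I would record the group theory. Since $T$ is a Cartan subgroup, it is normal in its normalizer $N$ with finite quotient $N/T = W$, so the projection $G/T \to G/N$, $gT \mapsto gN$, is a principal $W$-bundle: the Weyl group $W = N/T$ acts on $G/T$ on the right by $(gT)\cdot(nT) = gnT$, which is well defined (using that $n$ normalizes $T$), free, and transitive on the fibres of $G/T \to G/N$. This map is equivariant for the left $G$-actions, so feeding $P$ through the associated-bundle construction of the earlier definition produces a map $\pi \colon P \times_G G/T \to P \times_G G/N$ which is again a principal $W$-bundle; indeed over an open set $U \subset X$ trivializing $P$ it is just the projection $U \times G/T \to U \times G/N$, so it is locally trivial with fibre the finite set $W$, i.e.\ a $|W|$-sheeted covering map.

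Next, given the section $s \colon X \to P \times_G G/N$, I would pull $\pi$ back along $s$ to get a principal $W$-bundle (equivalently a finite covering) $s^{*}(P \times_G G/T) \to X$. A section of $P \times_G G/T$ lying over $s$ is exactly a section of this pulled-back covering. Since $X = \p$ is simply connected, every covering of $X$ is trivial, so $s^{*}(P \times_G G/T) \cong X \times W$ and in particular admits a section; composing it with the canonical map $s^{*}(P \times_G G/T) \to P \times_G G/T$ gives the section in $P \times_G G/T$ that we want. Holomorphy is automatic, since all the structure maps involved are local biholomorphisms.

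The only real point to get right is the identification of $G/T \to G/N$ as a principal $W$-bundle and the observation that the associated-bundle functor turns it into the covering $P \times_G G/T \to P \times_G G/N$; once that is in place, triviality of coverings over the simply connected base $\p$ finishes the argument immediately. One could phrase the last step as ``finite coverings of $\p$ are classified by $\pi_1(\p) = 0$'', but that is the same fact in other words.
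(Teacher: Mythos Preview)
Your argument is correct and is essentially the paper's own proof: the paper also observes that $P\times_G G/T \to P\times_G G/N$ has fibre the Weyl group $W$ and then uses the simple connectivity of $X=\p$ to trivialize the $W$-piece and lift the section. The only cosmetic difference is that the paper pauses to argue that $W$ is discrete (via $\Aut(T)\subset \GL_n(\bZ)$) rather than citing finiteness of $W$ directly, and phrases the lifting via a short exact sequence of associated bundles instead of your explicit pullback covering; your version is cleaner.
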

\begin{proof}
We first prove the Weyl group is discrete. For any torus $T$ of rank $n$ we have the following short exact sequence:
$$0 \to \bZ^n \to \bR^n \to T \to 0$$
It follows that $\aut(T) \subset \GL_n(Z)$ which is discrete.\\

We then have the sequence:
$$0 \to P \times_G W \to P\times_G G/T \to P\times_G G/N \to 0$$
Since $X$ is simply connected $P \times_G W$ is trivial and we have the desired.
\end{proof}
\begin{defi}
We define the Killing form as:
$B(x,y) = {\rm tr}(ad(x)ad(y))$
For $x,y\in \mathfrak{g}$.

It has a few key properties that we will use. Namely:
\begin{itemize}
    \item That the Killing form of a nilpotent algebra is everywhere $0$.
    \item A Lie algebra is Semi-simple iff the Killing form is non-degenerate over the algebra
    \item $2$ ideals of a Lie algebra have no intersections then they are orthogonal with respect to the Killing form.
\end{itemize}

\end{defi}

Suppose $G$ is a compact reductive Lie group. Writing $\mathfrak{g} = \mathfrak{z} \oplus \mathfrak{s}$ for the abelian and semi-simple parts respectively induces a decomposition of each of the fibers $E_x = E_x^1 \oplus E_x^o$. It suffices to show we can find a regular element in the semi-simple part.\\

Now let $G$ be a compact semi-simple Lie group.
Let $E_k$ be the vector subfibers of $E$ with meromorphic sections of degree at least $k$.
Notice that  $[E_i,E_j] \subset E_{i+j}$ by counting degrees. This implies that elements of $E_1$ are $ad_{\mathfrak{g}}$-nilpotent. Let the sub-algebra defined by $E_1$ be $\mathfrak{g}_1$ and we now have an orthogonal fiber $E_0$, by the Killing form. Let the orthogonal sub-algebra under the Killing form be $\mathfrak{g}_0$.

\begin{lemma}
There is a section in $P \times_G ad$ that is regular at some point.
\end{lemma}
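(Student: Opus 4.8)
The plan is to realise $E = P\times_G\mathrm{ad}$ via the vector-bundle form of Grothendieck's theorem (already proved) and then to extract a regular section from the resulting degree filtration, using that the bracket and the Killing form are compatible with it. Fix a decomposition $E\cong\bigoplus_{i=1}^{N}\mO(d_i)$, $N=\dim\mathfrak g$, and let $E_{\geq k}$ be the subbundle spanned by the summands with $d_i\geq k$. Since the Killing form is fibrewise nondegenerate it identifies $E$ with $E^{*}\cong\bigoplus_i\mO(-d_i)$, so (the multiset of twists being a bundle invariant) $\{d_i\}$ is symmetric about $0$; in particular $\dim(E_{\geq 0})_x+\dim(E_{\geq 1})_x=N$ at every point $x$. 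Next, a component $\mO(d_i)\otimes\mO(d_j)\to\mO(d_l)$ of the Lie bracket is a global section of $\mO(d_l-d_i-d_j)$ and hence vanishes whenever $d_l<d_i+d_j$; thus the bracket is filtered, $[E_{\geq i},E_{\geq j}]\subseteq E_{\geq i+j}$. It follows that $E_{\geq 0}$ is a bundle of Lie subalgebras, $E_{\geq 1}$ a bundle of ideals in it, and that $\mathrm{ad}(x)$ raises the filtration by at least one, hence is a nilpotent endomorphism of the fibre $\mathfrak g$, whenever $x$ lies in a fibre of $E_{\geq 1}$.

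Fix $x_0\in X$ and write $\mathfrak g_0,\mathfrak g_1$ for the fibres of $E_{\geq 0},E_{\geq 1}$ at $x_0$, viewed inside $E_{x_0}\cong\mathfrak g$. The first thing I would prove is the Killing orthogonality $\mathfrak g_0=\mathfrak g_1^{\perp}$: for $x\in\mathfrak g_{\geq 1}$ and $y\in\mathfrak g_{\geq 0}$ the operator $\mathrm{ad}(x)\mathrm{ad}(y)$ raises the filtration, hence is nilpotent, hence has trace $0$, so $\mathfrak g_1\subseteq\mathfrak g_0^{\perp}$; the dimension identity from the previous paragraph makes this an equality. The second, and main, step is to show $\mathfrak g_0$ contains a Cartan subalgebra of $\mathfrak g$: since $\mathfrak g_1$ is a subalgebra all of whose elements are $\mathrm{ad}$-nilpotent, the unipotent subgroup $\exp(\mathfrak g_1)$ of the adjoint group has a fixed point on the flag variety, so $\mathfrak g_1$ is contained in the nilradical $\mathfrak n$ of some Borel $\mathfrak b=\mathfrak h\oplus\mathfrak n$; as the Killing form pairs $\mathfrak h$ trivially with every root space we get $B(\mathfrak h,\mathfrak n)=0$, hence $\mathfrak h\subseteq\mathfrak g_1^{\perp}=\mathfrak g_0$. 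In particular $\mathfrak g_0$ contains elements that are regular in the Lie algebra $E_{x_0}\cong\mathfrak g$ — any point of $\mathfrak h$ off the root hyperplanes.

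It remains to promote a regular element of the fibre to a global section. Evaluation at $x_0$ is the direct sum of the maps $H^{0}(X,\mO(d_i))\to\mO(d_i)_{x_0}$; for $d_i\geq 0$ this is onto (there is a section of $\mO(d_i)$ not vanishing at $x_0$, since those that do vanish form a proper subspace), while for $d_i<0$ the source is $0$. Hence the image of $\mathrm{ev}_{x_0}\colon H^{0}(X,E)\to E_{x_0}$ is exactly $(E_{\geq 0})_{x_0}=\mathfrak g_0$. Choosing $\xi\in\mathfrak g_0$ that is a regular element of $E_{x_0}$ and lifting it through $\mathrm{ev}_{x_0}$ yields a holomorphic section $s\in H^{0}(X,\mO(E))$ with $s(x_0)=\xi$ regular, which is exactly the assertion. (One may then feed $s$ into the earlier lemma to see it is regular at every point.)

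I expect the main obstacle to be the middle paragraph: turning the abstract data ``filtered Lie-algebra bundle of semisimple type over the sphere'' into the statement that the degree-$\geq 0$ fibre contains a Cartan subalgebra of $\mathfrak g$. This rests on two classical facts — an $\mathrm{ad}$-nilpotent subalgebra sits in the nilradical of a Borel, and a Cartan is Killing-orthogonal to all root spaces — glued together by the degree symmetry coming from $E\cong E^{*}$. Everything else (the bracket respecting the filtration, and the evaluation map hitting exactly the nonnegative part) is routine bookkeeping with Grothendieck's theorem and the computation $\dim H^{0}(X,\mO(m))=m+1$ for $m\geq 0$ on the sphere.
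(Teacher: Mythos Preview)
Your argument is correct and follows the same strategy as the paper's (degree filtration from the Grothendieck splitting, Killing-orthogonality $\mathfrak g_0=\mathfrak g_1^{\perp}$, a regular element of $\mathfrak g$ inside $\mathfrak g_0$, then lift via the evaluation map on global sections), only fleshed out in considerably more detail. In particular your Borel fixed-point step, showing $\mathfrak g_1$ sits in some nilradical $\mathfrak n$ so that the corresponding Cartan $\mathfrak h\subseteq\mathfrak n^{\perp}\subseteq\mathfrak g_1^{\perp}=\mathfrak g_0$, makes precise exactly what the paper's one-line ``Consider the Cartan subalgebras of $\mathfrak g_0$. Choose a regular element'' leaves to the reader.
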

\begin{proof}
Consider the Cartan subalgebras of $\mathfrak{g}_0$. Choose a regular element. Since $\mathfrak{g}_0$ is orthogonal to $\mathfrak{g}_1$, lift it to a global section.
\end{proof}

We need $1$ more lemma before we are finally ready to prove Grothendieck's theorem.
\begin{lemma}
If $G$ is a reductive connected Lie group. There is some finite subgroup, $z$, so that $G/z$ is the product of an abelian and semisimple group.
\end{lemma}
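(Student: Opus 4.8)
The plan is to lean on the standard structure theory of compact connected Lie groups, which is the setting the paper is working in. Write $\mathfrak{g} = \mathfrak{z} \oplus \mathfrak{s}$ as in the definition of reductive, with $\mathfrak{z}$ the center and $\mathfrak{s}$ semisimple; since a semisimple Lie algebra is perfect and $\mathfrak{z}$ is central, $[\mathfrak{g},\mathfrak{g}] = \mathfrak{s}$. Let $Z = Z(G)^0$ be the identity component of the center of $G$ and let $G' = [G,G]$ be the derived subgroup. The first step is to recall the two basic facts: $G'$ is a closed connected subgroup with Lie algebra $\mathfrak{s}$, hence $G'$ is semisimple; and $Z$ is a torus with Lie algebra $\mathfrak{z}$, hence abelian.

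Next I would study the multiplication homomorphism $\mu : Z \times G' \to G$, $\mu(a,b) = ab$, which is well defined because $Z$ is central. Its differential at the identity is the addition map $\mathfrak{z} \oplus \mathfrak{s} \to \mathfrak{g}$, an isomorphism by reductivity; so $\mu$ is a local diffeomorphism near $e$, its image contains a neighbourhood of $e$, and since $G$ is connected $\mu$ is surjective. As $d\mu$ is an isomorphism, $\ker\mu$ is discrete, and being a closed subgroup of the compact group $Z\times G'$ it is finite. Writing $F = Z \cap G'$, one sees $\ker\mu = \{(a,a^{-1}) : a \in F\}$, so $F$ is finite; alternatively $F$ lies in the center of the compact semisimple group $G'$, which is finite.

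Then I would extract the required finite subgroup of $G$ itself. Take $N = F \subseteq Z \subseteq G$, a finite central, hence normal, subgroup. I claim $\mu^{-1}(N) = F \times F$: if $ab \in F$ with $a\in Z$ and $b \in G'$, then $a = (ab)b^{-1} \in G'$ while $a\in Z$, so $a \in F$, and then $b = a^{-1}(ab) \in F$; the reverse inclusion is immediate. Since $\ker\mu \subseteq \mu^{-1}(N)$, the surjection $\mu$ descends to an isomorphism $(Z\times G')/(F\times F) \isomto G/N$, i.e. $G/N \cong (Z/F) \times (G'/F)$. Here $Z/F$ is a quotient of a torus, hence abelian, and $G'/F$ is the quotient of a semisimple group by a finite central subgroup, hence semisimple, which is exactly the claim with $z = N$.

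I expect no serious obstacle here: in the compact setting every ingredient is a direct appeal to the structure theory quoted above, and the only points demanding care are the kernel/preimage bookkeeping and the finiteness of $Z(G)^0 \cap [G,G]$ — the latter coming from the fact that the center of a compact semisimple Lie group is finite.
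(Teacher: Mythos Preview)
The paper states this lemma without proof, so there is nothing to compare against; your argument is the standard structure-theory proof and it is correct. The one point worth flagging is that you (appropriately, given the paper's standing assumption that $G$ is compact) use compactness twice---once to pass from ``discrete'' to ``finite'' for $\ker\mu$, and once to know the center of the semisimple factor $G'$ is finite---so your proof, as written, establishes the lemma in the compact setting the paper is working in rather than for arbitrary reductive connected Lie groups.
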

\section{Grothendieck's Theorem}
\begin{theorem}{Classification of Principle Bundles on $\p$}
Let $G$ be a reductive connected Lie group.
The map:
$$H^1(X,O_X(H))/W \to H^1(X,O_X(G))$$
Is a bijection.
\end{theorem}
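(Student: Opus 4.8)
The plan is to prove that the map is well-defined, surjective and injective in turn; the simple connectedness of $X=\p$ enters at several points.

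\textbf{Surjectivity.} Given a holomorphic principal $G$-bundle $P$, I would form the adjoint bundle $E=P\times_G\mathrm{ad}$ and run the chain of reduction lemmas already established. The lemma producing a section of $P\times_G\mathrm{ad}$ that is regular at some point (available for reductive $G$ via the splitting $\mathfrak{g}=\mathfrak{z}\oplus\mathfrak{s}$ into abelian and semisimple parts) gives a holomorphic section $s$ of $E$ regular at some $a\in X$; the propagation-of-regularity lemma then makes $s$ regular at every point of $X$; the lemma on sections of $P\times_G G/N$ turns this into a reduction of $P$ to the normaliser $N$ of $H$; and the lemma on $P\times_G G/T$, using that $P\times_G W$ is trivial because $\pi_1(X)=1$, refines it to a reduction of $P$ to $H$. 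This exhibits $P\cong P_H\times_H G$ for a holomorphic $H$-bundle $P_H$, so already $H^1(X,\mathcal{O}_X(H))\to H^1(X,\mathcal{O}_X(G))$ is onto, hence so is the induced map on the $W$-quotient.

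\textbf{Well-definedness.} For $w\in W=N/H$ with a representative $n\in N$, conjugation by $n$ is an automorphism of $H$, and I would check — cleanly on cocycles — that twisting an $H$-bundle $P_H$ by this automorphism produces a bundle $P_H^w$ whose associated $G$-bundle has the conjugate (hence, in $G$, cohomologous) cocycle, so $P_H^w\times_H G\cong P_H\times_H G$. Thus the composite to $H^1(X,\mathcal{O}_X(G))$ is constant on $W$-orbits and the map in the statement makes sense.

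\textbf{Injectivity.} Here I would first identify the source concretely: since $H\cong(\bC^*)^r$, the degree bijection $H^1(X,\mathcal{O}_X^*)\cong\bZ$ gives $H^1(X,\mathcal{O}_X(H))\cong X_*(H)$, the cocharacter lattice, compatibly with the $W$-action, and I write $P_\lambda$ for the bundle attached to $\lambda$. Suppose $P_\lambda\times_H G\cong P_\mu\times_H G$; after replacing $\lambda,\mu$ by $W$-conjugates I may assume both dominant, and it suffices to deduce $\lambda=\mu$. For each irreducible representation $V_\chi$ of $G$ of highest weight $\chi\in X^*(H)$, passing to associated vector bundles yields $\bigoplus_\nu\mathcal{O}(\langle\nu,\lambda\rangle)^{\oplus m_\nu}\cong\bigoplus_\nu\mathcal{O}(\langle\nu,\mu\rangle)^{\oplus m_\nu}$ over the weights $\nu$ of $V_\chi$, so Grothendieck's theorem for vector bundles forces equality of the multisets $\{\langle\nu,\lambda\rangle\}_\nu=\{\langle\nu,\mu\rangle\}_\nu$. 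Since $\lambda$ is dominant and $\chi-\nu$ is a nonnegative integer combination of simple roots for every weight $\nu$ of $V_\chi$, the maximum of the left multiset equals $\langle\chi,\lambda\rangle$ and likewise on the right, so $\langle\chi,\lambda\rangle=\langle\chi,\mu\rangle$; letting $\chi$ run over dominant weights in $X^*(H)$ spanning $X^*(H)\otimes\bR$ forces $\lambda=\mu$ in $X_*(H)$.

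\textbf{The hard part.} The real content is the surjectivity argument, but it is essentially pre-packaged by the reduction lemmas; the one place demanding care is ensuring the section of $E$ those lemmas supply is holomorphic and regular at \emph{every} point, so that the reduction of structure group to $H$ it yields is a reduction of holomorphic principal bundles — precisely the role of the two regularity lemmas. Once the vector-bundle classification is in hand, the well-definedness and injectivity steps are routine, the only mild subtlety being the weight bookkeeping and the choice of enough representations $V_\chi$ to separate cocharacters.
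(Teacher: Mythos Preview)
Your surjectivity argument is exactly the paper's: it too simply invokes the chain of reduction lemmas (regular section of $E$, propagation of regularity, reduction to $N$, then to $H$ via triviality of $P\times_G W$ on the simply connected $X$). Your well-definedness paragraph is a welcome addition the paper omits. Where you genuinely diverge is injectivity. The paper argues structurally: it uses the lemma that a finite central $z\subset G$ makes $G/z$ a product of an abelian and a semisimple group, sets up the commuting square relating $H^1(X,\mathcal{O}_X(-))$ for $H$, $G$, $H/z$, $G/z$, and observes that since $H^1(X,z)=0$ (finite coefficients on a simply connected space) the vertical maps are bijections, so one is reduced to the product case, where the Weyl-orbit identification is read off from the third isomorphism $(N/z)/(H/z)\cong W$. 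Your route is representation-theoretic: you identify $H^1(X,\mathcal{O}_X(H))$ with the cocharacter lattice, push $H$-bundles through irreducible $G$-representations to vector bundles, invoke the already-proved Grothendieck splitting to match multisets of degrees, and separate dominant cocharacters by pairing with enough highest weights. Both are valid; the paper's argument is shorter and more categorical but leans on the unproved structural lemma about $G/z$, while yours is more self-contained relative to what the paper has actually established (the vector-bundle classification and the degree bijection) and makes the role of the Weyl group transparent via the passage to a dominant chamber.
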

\begin{proof}
We have seen the surjectivity of it above.
Consider the commutative diagram:

\tikzset{every picture/.style={line width=0.75pt}} %

\begin{center}
\begin{tikzpicture}[x=0.75pt,y=0.75pt,yscale=-1,xscale=1]
\draw    (296,105.22) -- (338,105.22) ;
\draw [shift={(340,105.22)}, rotate = 180] [color={rgb, 255:red, 0; green, 0; blue, 0 }  ][line width=0.75]    (10.93,-3.29) .. controls (6.95,-1.4) and (3.31,-0.3) .. (0,0) .. controls (3.31,0.3) and (6.95,1.4) .. (10.93,3.29)   ;

\draw    (301,168.22) -- (343,168.22) ;
\draw [shift={(345,168.22)}, rotate = 180] [color={rgb, 255:red, 0; green, 0; blue, 0 }  ][line width=0.75]    (10.93,-3.29) .. controls (6.95,-1.4) and (3.31,-0.3) .. (0,0) .. controls (3.31,0.3) and (6.95,1.4) .. (10.93,3.29)   ;

\draw    (240,121.22) -- (240.95,161.22) ;
\draw [shift={(241,163.22)}, rotate = 268.64] [color={rgb, 255:red, 0; green, 0; blue, 0 }  ][line width=0.75]    (10.93,-3.29) .. controls (6.95,-1.4) and (3.31,-0.3) .. (0,0) .. controls (3.31,0.3) and (6.95,1.4) .. (10.93,3.29)   ;

\draw    (395,120.22) -- (395.95,158.22) ;
\draw [shift={(396,160.22)}, rotate = 268.57] [color={rgb, 255:red, 0; green, 0; blue, 0 }  ][line width=0.75]    (10.93,-3.29) .. controls (6.95,-1.4) and (3.31,-0.3) .. (0,0) .. controls (3.31,0.3) and (6.95,1.4) .. (10.93,3.29)   ;

\draw (245,104) node   {$H^{1}( X,O_{x}( H))$};
\draw (391,103) node   {$H^{1}( X,O_{x}( G))$};
\draw (404,170) node   {$H^{1}( X,O_{x}( G/z))$};
\draw (243,172) node   {$H^{1}( X,O_{x}( H/z))$};

\end{tikzpicture}
\end{center}
Suppose $\alpha,\beta \in H^1(X,O_X(H))$ are mapped to the same image in $H^1(X,O_X(G))$.
Looking at the diagram, it is clear that they must have the same image in $H^1(X,O_X(G/z))$ or $H^1(X,O_X(H/z))$. In the first, by $3^{rd}$ isomorphism theorem we have that $\alpha$ and $\beta$ are in the same equivalence class when taking the quotient with the Weyl group: $(N/z)/(H/z) = W$. In the second case we have a contradiction as $H^1(X,z) = 0$ by $z$ being finite and $X$ being connected
inducing a bijection in the first cohomology groups $H^1(X,H)$ and $H^1(X,H/z)$.

\end{proof}
\chapter{Acknowledgments}
We would like to thank Stephen New for his patience when explaining topics in differential geometry. He taught me all the Lie theory I know and suggested this topic as a capstone for the course in Compact Lie Theory.

\newpage
\printbibliography
\nocite{*}
\end{document}